\documentclass[12pt]{article}
\pretolerance10000
\usepackage{amsfonts,amsmath}
\usepackage{amssymb}
\usepackage{latexsym}
\usepackage{epsfig}
\usepackage{graphicx,color,graphics}

\usepackage{caption}
\usepackage{subcaption}
\usepackage{xcolor}
\usepackage{tcolorbox,color}
\usepackage{lineno}
\usepackage{pgf,tikz}
\usepackage{mathrsfs}
\usepackage{multicol}
\usepackage{enumitem}
\usepackage{mdwlist}
\usepackage{dsfont}
\usetikzlibrary{arrows}
\usetikzlibrary{patterns}
\setcounter{MaxMatrixCols}{10}
\setlength{\oddsidemargin}{-0.4432mm}
\setlength{\evensidemargin}{-0.04432mm}
\setlength{\topmargin}{-20mm} \setlength{\textheight}{23 cm}
\setlength{\textwidth}{16cm}

\newtheorem{lemma}{Lemma}[section]

\newtheorem{theorem}[lemma]{Theorem}

\newtheorem{remark}[lemma]{Remark}

\newcommand{\N}{\ifmmode{{\Bbb N}}\else{\mbox{${\Bbb N}$}}\fi}
\newcommand{\R}{\ifmmode{{\Bbb R}}\else{\mbox{${\Bbb R}$}}\fi}

	\begin{document}
		\title{Thermoelastic plates with type I heat
			conduction with second gradient.}
\author{Jaime Mu\~noz Rivera$^{1,}$\footnote{jemunozrivera@gmail.com},\ \ Elena Ochoa Ochoa $^{2,}$\footnote{elenaochoaochoa18@gmail.com}, \ \  Ram\'on Quintanilla $^{3,}$ \footnote{ramon.quintanilla@upc.edu}.  \\  \small \it $^{1}$Universidad del B\'io-B\'io, Departamento de Matem\'atica, Facultad de Ciencias,\\
	\small \it Avenida Collao 1202, Concepci\'on, Chile.\\
	\small \it
	\textcolor{red}{ $^{1}$National Laboratory for Scientific Computation, Petropolis, Brazil}\\
	\small \it
	 $^{2}$Universidad Andres Bello, Departamento de Matemáticas, Facultad de Ciencias Exactas\\
	\small \it Sede Concepción, Autopista Concepción-Talcahuano 7100, Talcahuano, Chile.\\
	\small \it $^{3}$ Universidad Politécnica de Catalunya,  Departamento de Matemáticas,\\
	\small \it Colom 11, 08222 Terrassa, Spain.			    
}		
\date{}
	\maketitle

	\begin{abstract}

This paper investigates the qualitative properties of thermoelastic plates
modeled by the second-gradient theory with a Type I heat equation. We establish
the exponential stability of the solutions. Our main contribution is to prove that the
semigroup is non-differentiable when the bi-Laplacian operator appears in the heat
equation. Additionally, we analyze the case where the elastic parameter is negative, demonstrating the uniqueness and instability of the solutions. Finally, in the one-dimensional quasi-static case, we demonstrate the existence and exponential decay of the solutions under specific conditions.

\end{abstract}

		\noindent{\it Keywords and phrases}: Euler Bernoulli equation,  semigroup theory,  smoothing effect.

		\section{Introduction}


	Extensive research has been conducted on the asymptotic behavior of thermoelastic plates in bounded domains. When the mechanical component is conservative and the thermal dissipation is parabolic, the solutions are guaranteed to be exponentially stable and semigroup analytic, as demonstrated by the referenced sources \cite{Z1,Z2,Z3}.
	 However, such regular behavior cannot be expected in the case of the Lord-Shulman type of dissipation, in which both exponential stability and analyticity are lost \cite{R1}. Similarly, in the Green-Lindsay case, analyticity is absent, though exponential stability can still be ensured \cite{R2}. Recent studies have also investigated plates with mechanical dissipation and thermal conservation, yielding different results \cite{M2,M3}. When the plate occupies the entire space, alternative results have been obtained \cite{R2}.\\	
	It is commonly assumed that dissipation induces a regularizing effect, suggesting that coupling with a regularity property in the solutions will be preserved or even enhanced with increased dissipation.  However, recent studies have challenged this intuition by revealing cases in which it does not align with the mathematical analysis. Notably, introducing the bi-Laplacian operator into certain couplings has been shown to eliminate the exponential stability that would otherwise be present \cite{M1}. Furthermore, these effects have only been examined in the context of second-order equations with respect to the time variable.

Recently, there has been a growing interest in studying the impact of higher-order spatial derivatives on various thermomechanical issues. These equations are well-established in the context of the elastic component and have been studied for several years. However, there has also been an increased focus on incorporating these terms into equations that governing porosity and heat transfer. The interest in heat equations may be driven by phenomena observed in various gases. Furthermore, including gradient effects in thermomechanical models has been shown to be significant \cite{M2.1}. The work of Iesan \cite{D1,D2,D3,D4, D5} is particularly noteworthy in this regard. Building on the framework of Green and Naghdi formulation \cite{[5],[6],[7]}, Iesan has developed distinct theories  that incorporate second-gradient effects into the heat equation.

	This article focuses on analyzing the qualitative properties of thermoelastic plates under the second-gradient theory of the Type I heat equation. This theory, is derived from the framework proposed in \cite{D2}, and can be is obtained by restricting the considered independent variables, since Type I theories are known to be a sub-class of Type III theories. The system of equations that governs this problem is expressed as follows:
		\begin{align}
	\rho u_{tt}&=-c\Delta^2 u+\eta \Delta \theta,\label{eq1}\\
		a \theta_t&= b \Delta \theta - d\Delta^2 \theta- \eta \Delta u_t. \label{eq2}
	\end{align}
	In this system, $u$ represents the displacement, $\theta$  denotes the temperature, $\rho$  is the mass density, $a$ is the heat capacity, $c$ is the  elasticity coefficient, $b$ is the thermal conductivity, $\eta$  is the coupling constant, and $d$ is a novel parameter introduced in the higher-order theory. All parameters are assumed to be constant, with 
$\rho$, $a$, $b$, $c$ and $d$ taken as positive, while $\eta$ is only required to be nonzero, except in the final setting, where we assume $c<0$.

We study the problem governed by this system in a bounded domain $\Omega$  with a smooth boundary in an $n$-dimensional Euclidean space. To fully define the problem, we must specify the appropriate boundary conditions:
		\begin{align}
u({\bf {x}}, t)=\Delta u({\bf {x}}, t)=\theta ({\bf {x}}, t)=\Delta \theta ({\bf {x}}, t)=0,\, \, \, {\bf {x}}\in \partial \Omega , \, \, t>0, \label{fr1}
	\end{align}

	as well as some initial conditions:
		\begin{align}
	u({\bf {x}}, 0)=u^0({\bf {x}}),\, \, \, u_t({\bf {x}}, 0)=v^0({\bf {x}}),\, \, \,\theta ({\bf {x}}, 0)=\theta^0({\bf {x}}).\label{ci}
	\end{align}
	
	In this article, we prove  the well-posedness of the proposed system, and its initial and boundary conditions, in the Hadamard sense. We prove the existence
	of a contraction semigroup that governing the solutions, and we demonstrate their exponential 	
	stability. The primary novel contribution of this work is the proof that the semigroup is
	non-differentiable, and thus non-analytic, despite the analyticity of the semigroup when
	$d = 0$, as established in \cite{Z1}. This loss of regularity due to additional dissipation is
	unexpected.\footnote{Notably, this is the first instance of such a phenomenon observed in  a first-order equation in the time variable of the heat equation.} Nevertheless, we show that the only solution that vanishes on a set of positive
	measure is the trivial solution, thereby confirming the uniqueness of solutions for the
	backward-in-time problem.\\
	The manuscript is organized as follows: Section $2$ establishes the existence and
	uniqueness of solutions for the model. Section 3 proves exponential stability. Section
	4 demonstrates the non-differentiability of the semigroup. Section $5$ addresses the
	impossibility of localization. In Sections $6$ and $7$, we analyze the case where $c < 0$, confirming the uniqueness and instability of solutions. It is relevant recalling that the elasticity coefficient can be negative in the case for  pre-stressed materials (see \cite{D6}). Therefore, this assumption is compatible with the usual axioms of thermomechanics. However, for the one-dimensional
	quasi-static case, we prove the existence and exponential decay of solutions under
	specific conditions.


	
To investigate the dissipative properties, we construct the energy functionals.
 To this end, we multiply the equation \eqref{eq1} by  $u_t$  and equation \eqref{eq2} by $\theta$, yielding the following:
		\begin{align*}
			\dfrac{1}{2}\dfrac{d}{dt} \int_{\Omega} \left( \rho \left| u_t\right|^2+c \left| \Delta u\right|^2 \   \right) d\Omega =-\eta \int_{\Omega} \nabla \theta \nabla u_{t} \ d\Omega,\\
				\dfrac{1}{2}\dfrac{d}{dt} \int_{\Omega} a \left| \theta\right|^2 \ d\Omega + \int_{B} \left(b \left| \nabla \theta \right|^2 + d \left| \Delta \theta  \right| ^2  \right) \ d\Omega= \eta \int_{\Omega} \nabla \theta \nabla u_t \ d\Omega.
		\end{align*}
		Thus we see that,
			\begin{align*}
		\dfrac{1}{2}\dfrac{d}{dt} \int_{\Omega} \left(  \rho \left| u_t\right|^2+c \left| \Delta u\right|^2 + a \left| \theta\right|^2\right) \ d\Omega = -\int_{\Omega}\left(b \left| \nabla \theta \right|^2 + d \left| \Delta \theta  \right| ^2  \right) \ d\Omega.
	\end{align*}
In short we can write
\begin{align}
	E(t)+\int_{0}^{t} D(s) \ ds = E(0), \label{eq1.5}
\end{align}
where
\begin{align*}
	E(t)=\dfrac{1}{2} \int_{\Omega} \left(  \rho \left| u_t\right|^2+c \left| \Delta u\right|^2 + a \left| \theta\right|^2\right) \ d\Omega,
\end{align*}	 	
and
\begin{align*}
D(t)= \int_{\Omega}\left(b \left| \nabla \theta \right|^2 + d \left| \Delta \theta  \right| ^2  \right) \ d\Omega.
\end{align*}	 	

	\section{Existence and Uniqueness. } 	
	
	We are going to consider our problem \eqref{eq1}--\eqref{ci} in the Hilbert space
	$$\mathcal{H}=H_0^1(\Omega)\cap H^2(\Omega)\times L^2(\Omega) \times L^2 (\Omega), $$
	where $H_0^1$, $H^2$, and $L^2$ are the usual Sobolev spaces. The elements of this space can be written  as $U=(u,v,\theta)$. We can consider the inner product associated with the norm
	$$\left\|U \right\|_{\mathcal{H}} ^2= c\left\| \Delta u \right\|^2 +\rho \left\|v \right\|^2+a \left\|\theta \right\|^2.$$
	Note that this norm is equivalent to the usual one in $\mathcal{H}$. Therefore, we can write our problem in the form
	\begin{align}\label{Cauchy}
		U_t=\mathcal{A} U, \ \ U(0)=(u^0, v^0, \theta^0),
	\end{align}
	where the operator $\mathcal{A}$ is defined by
	\begin{align}\label{DA1}
		\mathcal{A}U=
\begin{pmatrix}
	v\\
	\dfrac{1}{\rho}\left(-c\Delta ^2 u+\eta \Delta \theta \right)  \\
\dfrac{	1}{a} \left( b\Delta \theta -d\Delta ^2 \theta-\eta \Delta v\right) 
\end{pmatrix}.
	\end{align}
	Here we consider the domain of the operator 
	\begin{align}\label{DA2}
	D(\mathcal{A})=H_*^4(\Omega)\times [H^2(\Omega)\cap H_0^1(\Omega)]\times H_*^4(\Omega) ,
	\end{align}
	where 
	$$
	H_*^4(\Omega)=\left\{u\in H^4(\Omega); \Delta u=u=0 \  \text{at the boundary} \right\}.
	$$
	It is not difficult to see that $	D(\mathcal{A})$  is a dense subspace of $	\mathcal{H}$. Moreover the operator $\mathcal{A}$ is dissipative, that is 
	\begin{align}
	\mbox{Re }\left\langle\mathcal{A}U,U \right\rangle=-\int_{\Omega}(b\left|\nabla \theta \right|^2+d\left| \Delta \theta \right|^2  ) \ d\Omega\leq 0,\quad \forall U\in D(\mathcal{A}). \label{dis1}
	\end{align}
	
	Let us consider $U=(u,v,\theta)^\top \in D(\mathcal{A})$ and $G=(g_1,g_2,g_3)^\top \in \mathcal{H}. $ The resolvent equation $i\omega U-AU=G$ in the terms of its component can be written as
			\begin{align}
		i\omega u-v= g_1 \in H^2,\label{ResO1}\\
		i\rho \omega v+c\Delta^2 u-\eta \Delta \theta= g_2 \in L^2,\label{ResO2}\\
		ia\omega \theta-b\Delta \theta+d\Delta^2\theta+\eta\Delta v= g_3\in L^2.\label{ResO3}
	\end{align}
	From \eqref{dis1} and the resolvent equation $i\omega U-\mathcal{A}U=G$ we get
\begin{eqnarray}\label{Diss}
\int_{\Omega}b\left( \left|\nabla \theta \right|^2+d\left| \Delta \theta \right|^2 \right)  \ d\Omega&=&\mbox{Re }(U,G)_{\mathcal{H}}.
\end{eqnarray}

\begin{theorem}
The operator $\mathcal{A}$, defined by equations \eqref{DA1} and \eqref{DA2}, generates a $C^0$-semigroup of contractions on the Hilbert space $\mathcal{H}$.
\end{theorem}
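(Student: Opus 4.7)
The plan is to apply the Lumer--Phillips theorem. Density of $D(\mathcal{A})$ in $\mathcal{H}$ is immediate from the density of $H^4_*(\Omega)$ in $H^2\cap H^1_0(\Omega)$ and of $H^2\cap H^1_0(\Omega)$ in $L^2(\Omega)$. Dissipativity has already been verified in \eqref{dis1}. Hence the only real task is to show surjectivity, namely that $R(I-\mathcal{A})=\mathcal{H}$, i.e.\ for every $G=(g_1,g_2,g_3)\in\mathcal{H}$ there exists $U=(u,v,\theta)\in D(\mathcal{A})$ solving $U-\mathcal{A}U=G$.

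First I would eliminate $v$ using the first component $v=u-g_1$ (noting that $g_1\in H^2\cap H^1_0$, so $\Delta g_1\in L^2$). Substituting into the remaining two components reduces the problem to the coupled elliptic system
\begin{align*}
\rho u+c\Delta^{2}u-\eta\Delta\theta &=\rho(g_1+g_2),\\
a\theta-b\Delta\theta+d\Delta^{2}\theta+\eta\Delta u &=ag_3+\eta\Delta g_1,
\end{align*}
posed in $\Omega$ with the Navier-type conditions $u=\Delta u=\theta=\Delta\theta=0$ on $\partial\Omega$. I would solve this weakly on the Hilbert space $V:=\bigl[H^2(\Omega)\cap H^1_0(\Omega)\bigr]^{2}$, setting
\begin{equation*}
\mathfrak{a}\bigl((u,\theta),(\tilde u,\tilde\theta)\bigr)=\rho(u,\tilde u)+c(\Delta u,\Delta\tilde u)+a(\theta,\tilde\theta)+b(\nabla\theta,\nabla\tilde\theta)+d(\Delta\theta,\Delta\tilde\theta)+\eta(\nabla\theta,\nabla\tilde u)-\eta(\nabla u,\nabla\tilde\theta),
\end{equation*}
where the cross terms arise after integration by parts of $-\eta\Delta\theta$ against $\tilde u$ and $\eta\Delta u$ against $\tilde\theta$ (the boundary integrals vanish because $\tilde u,\tilde\theta\in H^1_0$). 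Continuity on $V$ is standard, and the antisymmetry of the coupling makes it disappear on the diagonal: $\mathfrak{a}((u,\theta),(u,\theta))=\rho\|u\|^{2}+c\|\Delta u\|^{2}+a\|\theta\|^{2}+b\|\nabla\theta\|^{2}+d\|\Delta\theta\|^{2}$, which together with the equivalence of $\|\Delta\cdot\|$ and the $H^2$-norm on $H^2\cap H^1_0$ yields coercivity. Lax--Milgram then produces a unique weak solution $(u,\theta)\in V$.

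Finally I would upgrade regularity. From $c\Delta^{2}u=\rho(g_1+g_2)-\rho u+\eta\Delta\theta\in L^{2}(\Omega)$, classical elliptic regularity for the biharmonic operator with Navier boundary data gives $u\in H^4(\Omega)$ with $\Delta u=0$ on $\partial\Omega$, i.e.\ $u\in H^4_*(\Omega)$; similarly $\theta\in H^4_*(\Omega)$. Setting $v:=u-g_1\in H^2\cap H^1_0$, we then have $U=(u,v,\theta)\in D(\mathcal{A})$ with $U-\mathcal{A}U=G$. Combined with dissipativity and density, Lumer--Phillips yields the conclusion. The only delicate point is the choice of the variational space and integrating by parts so that the coupling becomes skew-symmetric, thereby granting coercivity without any smallness assumption on $\eta$; the required elliptic regularity with the Navier conditions $u=\Delta u=0$ is classical.
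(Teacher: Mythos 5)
Your proposal is correct, but it takes a genuinely different route from the paper. The paper does not verify the range condition at $\lambda=1$; instead it shows $0\in\varrho(\mathcal{A})$ by solving $-\mathcal{A}U=G$, and the key observation there is that at $\omega=0$ the system \emph{decouples}: $v=-g_1$ is known immediately, the temperature equation \eqref{Res2} then involves $\theta$ alone and is solved by a scalar Lax--Milgram argument plus elliptic regularity, after which $\theta$ is fed as data into the biharmonic equation \eqref{Res1} for $u$, solved by a second scalar Lax--Milgram application; generation then follows from dissipativity, density, and $0\in\varrho(\mathcal{A})$ (the standard Liu--Zheng corollary of Lumer--Phillips). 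You instead solve $(I-\mathcal{A})U=G$, where the system stays coupled, and you absorb the coupling into a single variational problem on $\bigl[H^2(\Omega)\cap H^1_0(\Omega)\bigr]^2$, observing that the cross terms $\eta(\nabla\theta,\nabla\tilde u)-\eta(\nabla u,\nabla\tilde\theta)$ are skew, hence drop out of $\mathrm{Re}\,\mathfrak{a}\bigl((u,\theta),(u,\theta)\bigr)$ (in the complex setting they contribute only an imaginary part, so you should invoke the complex Lax--Milgram lemma with coercivity of the real part --- a cosmetic fix, not a gap). What each approach buys: the paper's sequential decoupling keeps every elliptic solve scalar and elementary, but it exploits a structural accident of the point $\omega=0$ and needs the extra (standard) step from $0\in\varrho(\mathcal{A})$ to generation; your coupled skew-symmetric form is the canonical Lumer--Phillips range argument, works without any smallness condition on $\eta$ and without relying on the decoupling, and so transfers more readily to perturbations of the system where the equations no longer separate. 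Both arguments rest on the same elliptic-regularity input for the bilaplacian with Navier conditions $u=\Delta u=0$, which is legitimate on the smooth bounded domain assumed in the paper.
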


\begin{proof}
To show that $\mathcal{A}$  is the infinitesimal generator of a contraction semigroup, it is sufficient to show that 0 belongs to the resolvent of the operator. Indeed, let us take $G=(g_1,g_2,g_3)\in\mathcal{H}$ we want to find $U=(u,v, \theta)$ such that $-\mathcal{A}U=G.$ In terms of the components ($\omega=0$) we have 
\begin{align*}
	v&=-g_1,\\
	\Delta(-c\Delta u+\eta \theta)&= -\rho g_2,\\
	\Delta \left(b\theta-d\Delta \theta-\eta v \right) 	&=-ag_3.
\end{align*}
By definition we get  $v\in H_0^1\cap H^2$. By substituting $v$ into the next two equations, we arrive at 
\begin{align}
		\Delta(-c\Delta u+\eta \theta)&= -\rho g_2,\label{Res1}\\
	\Delta \left(b\theta-d\Delta \theta \right) 	&=-ag_3- \eta \Delta g_1 .\label{Res2}
\end{align}
Using  the Lax-Milgram's Lemma it is easy to show that for any $g_3\in L^2(\Omega)$ and  any $g_1\in H^2(\Omega)\cap H_0^1(\Omega)$ there exists only one solution $\theta\in H^2(\Omega)\cap H_0^1(\Omega)$ which is a weak solution to \eqref{Res2}. 
Using equation \eqref{Res2} and the elliptic regularity we conclude that $\theta\in H_*^4(\Omega)$. 
Since equation \eqref{Res1} and equation \eqref{Res2} are decoupled we consider $\theta$ as a data in equation \eqref{Res1}.  So we have 
$$
c\Delta^2 u= -\eta\Delta\theta +\rho g_2\in L^2(\Omega).
$$
Using  the Lax-Milgram's Lemma and the elliptic regularity again, we conclude that  there exists only one solution $u\in H_*^4(\Omega)$. In summary we proved that for any $G \in \mathcal{H}$ there is only one solution $U=(u,v,\theta)\in D(\mathcal{A})$ verifying $\mathcal{A}U=G$  and also 

$$ \left\|U \right\|_{\mathcal{H}} \leq C \left\| G \right\|_{\mathcal{H}}, $$
hence zero belongs to the resolvent set $\varrho(\mathcal{A})$. 

\end{proof}
\bigskip
In particular we have: 

\begin{theorem}\label{existencia2} There is only one solution to the Cauchy problem    \eqref{Cauchy} which depends continuously on the initial data. Moreover 
if $U_0\in \mathcal{H}$, there exists only one solution $U$ verifying
$$U(t)\in C([0,\infty);\mathcal{H}). $$
if  $U_0\in D({\mathcal{A}})$, then the solution has the regularity:
$$U (t)\in C^{1}([0,\infty);\mathcal{H})\cap C([0,\infty);D({\mathcal{A}})). $$
\end{theorem}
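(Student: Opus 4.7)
The plan is to deduce this result directly from the previous theorem, which established that $\mathcal{A}$ generates a $C^0$-semigroup of contractions $\{S(t)\}_{t\geq 0}$ on $\mathcal{H}$. Once the generation result is in hand, Theorem \ref{existencia2} is a standard consequence of the general theory of linear semigroups (see, e.g., Pazy's monograph), so my proposal is essentially to spell out which piece of that theory gives each conclusion.

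First, I would define the candidate solution by $U(t) = S(t) U_0$. For $U_0 \in \mathcal{H}$, the basic semigroup property that $t \mapsto S(t) U_0$ is continuous from $[0,\infty)$ into $\mathcal{H}$ immediately yields $U \in C([0,\infty); \mathcal{H})$; this provides the mild solution. For $U_0 \in D(\mathcal{A})$, I would invoke the standard fact that $S(t)$ leaves $D(\mathcal{A})$ invariant, that $t \mapsto S(t) U_0$ is then continuously differentiable into $\mathcal{H}$ with $\frac{d}{dt} S(t) U_0 = \mathcal{A} S(t) U_0 = S(t) \mathcal{A} U_0$, and that this in particular shows $U \in C^1([0,\infty); \mathcal{H}) \cap C([0,\infty); D(\mathcal{A}))$, where $D(\mathcal{A})$ carries the graph norm (which is equivalent to the norm induced by \eqref{DA2} because $\mathcal{A}$ is closed as a generator).

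Uniqueness and continuous dependence both come from the contraction estimate
\begin{equation*}
\|S(t) U_0 - S(t) \widetilde{U}_0\|_{\mathcal{H}} = \|S(t)(U_0 - \widetilde{U}_0)\|_{\mathcal{H}} \leq \|U_0 - \widetilde{U}_0\|_{\mathcal{H}},
\end{equation*}
which is valid for all $t \geq 0$ because $\{S(t)\}$ is a contraction semigroup. Taking $\widetilde{U}_0 = U_0$ for any alternative solution and applying the uniqueness part of the Cauchy problem theory for semigroup generators (a solution of $U_t = \mathcal{A}U$ with given initial datum in $D(\mathcal{A})$ must coincide with $S(t)U_0$) gives uniqueness; letting $\widetilde{U}_0 \to U_0$ gives continuous dependence.

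There is no substantial obstacle here: the entire theorem is a corollary of the generation theorem established in the preceding result. The only point requiring minor care is verifying that the regularity $C([0,\infty); D(\mathcal{A}))$ is the correct interpretation, namely with respect to the graph norm of $\mathcal{A}$; this follows since $D(\mathcal{A})$ equipped with the graph norm is a Banach space and $S(t)|_{D(\mathcal{A})}$ is a $C^0$-semigroup on it (the part of $\mathcal{A}$ in $D(\mathcal{A})$ generating the restriction). No further estimates beyond those used to establish the generation of the semigroup are needed.
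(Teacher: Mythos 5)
Your proposal is correct and matches the paper's approach exactly: the paper presents this theorem as an immediate corollary of the preceding generation result (introducing it with ``In particular we have''), and the standard semigroup facts you cite --- continuity of $t \mapsto S(t)U_0$, invariance of $D(\mathcal{A})$ with $\frac{d}{dt}S(t)U_0 = \mathcal{A}S(t)U_0$, and the contraction estimate for uniqueness and continuous dependence --- are precisely the implicit content. Your added care about the graph norm on $D(\mathcal{A})$ is a reasonable clarification but does not change the argument.
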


	\section{Exponential Stability. } 	
	
In this section, we will prove that the solutions to the problems studied in the previous section decay exponentially.  Our main tool is the following result by J. Pruess.  \cite{Pr84}.

\begin{theorem}\label{Pruss}
Let $ S(t)$ be a contraction $C_{0}$-semigroup, generated by  ${\mathcal A}$
over a Hilbert space ${\cal H}$. Then,  there exists $C, \gamma>0$ verifying
\begin{equation}\label{ppp}
\|S(t)\|\leq Ce^{-\gamma t}\quad \Leftrightarrow\quad i\,\mathbb{R}\subset\varrho({\mathcal A}) \, \mbox{ and }\,
\|(i\,\lambda\,I - {\mathcal A})^{-1}\|_{{\cal L}({\cal H})}
\leqslant M, \;\; \forall\,\lambda\in\mathbb{R}.
\end{equation}
\end{theorem}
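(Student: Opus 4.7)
\bigskip

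The plan is to establish the two implications in the stated equivalence separately. The forward direction is a fairly direct consequence of the integral representation of the resolvent, whereas the reverse direction carries the substance of the theorem and rests crucially on the Hilbert-space structure through Plancherel's identity.

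For the implication $(\Rightarrow)$, assuming $\|S(t)\|\le Ce^{-\gamma t}$ for some $C,\gamma>0$, standard semigroup theory places the spectrum of $\mathcal{A}$ in the half-plane $\{\mathrm{Re}\,\lambda\le-\gamma\}$, so in particular $i\,\mathbb{R}\subset\varrho(\mathcal{A})$. For every $\lambda\in\mathbb{R}$ and $x\in\mathcal{H}$, the representation
\[
(i\lambda I-\mathcal{A})^{-1}x=-\int_{0}^{\infty}e^{-i\lambda t}S(t)x\,dt
\]
combined with the exponential decay estimate yields $\|(i\lambda I-\mathcal{A})^{-1}\|_{\mathcal{L}(\mathcal{H})}\le C/\gamma$, uniformly in $\lambda\in\mathbb{R}$.

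For the converse $(\Leftarrow)$, the strategy is to verify the hypothesis of the Datko--Pazy theorem: a $C_{0}$-semigroup on a Hilbert space is exponentially stable as soon as $\int_{0}^{\infty}\|S(t)x\|^{2}\,dt<\infty$ for every $x\in\mathcal{H}$. Fix $x\in\mathcal{H}$. For $\alpha$ strictly larger than the semigroup growth bound, the map $f_{\alpha}(t)=\chi_{[0,\infty)}(t)\,e^{-\alpha t}S(t)x$ belongs to $L^{2}(\mathbb{R};\mathcal{H})$ and its Fourier transform equals $s\mapsto((\alpha+is)I-\mathcal{A})^{-1}x$; the Hilbert-space valued Plancherel identity then produces
\[
2\pi\int_{0}^{\infty}e^{-2\alpha t}\|S(t)x\|^{2}\,dt=\int_{-\infty}^{\infty}\|((\alpha+is)I-\mathcal{A})^{-1}x\|^{2}\,ds.
\]
The assumed uniform bound $\|(isI-\mathcal{A})^{-1}\|\le M$ on the imaginary axis, combined with the resolvent identity in the form of a Neumann series, extends to $\|((\alpha+is)I-\mathcal{A})^{-1}\|\le 2M$ on the strip $0<\alpha<1/(2M)$. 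Exploiting this pointwise control jointly with the $L^{2}$-identity above, one then shows that the right-hand side remains bounded uniformly in $\alpha\in(0,\alpha_{0})$, whence monotone convergence as $\alpha\to 0^{+}$ delivers $\int_{0}^{\infty}\|S(t)x\|^{2}\,dt\le C\|x\|^{2}$.

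The hardest step is precisely this last one: a pointwise resolvent bound on a strip does not by itself control the $L^{2}$-integral on the right-hand side, and one must jointly exploit the resolvent identity and the $L^{2}$-information provided by Plancherel at large $\alpha$ to propagate the estimate down to $\alpha=0$. The indispensable role of the Hilbert-space hypothesis surfaces here, since Plancherel's identity has no Banach-space analogue, and indeed the stated equivalence is known to fail for general $C_{0}$-semigroups on Banach spaces.
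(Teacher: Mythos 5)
The paper itself gives no proof of this statement: it is quoted as a known tool, with a pointer to Pr\"uss \cite{Pr84}, so there is no internal argument to compare against. Judged on its own, your proposal is a correct outline of the standard modern proof of the Gearhart--Pr\"uss--Huang theorem (the Plancherel--Datko route, as in Engel--Nagel, Thm.\ V.1.11): forward direction via the Laplace representation of the resolvent, converse via Plancherel for $\mathcal{H}$-valued $L^2$ functions, a Neumann-series extension of the resolvent bound to a strip, and Datko--Pazy. Two blemishes. First, a sign slip: the representation is $(i\lambda I-\mathcal{A})^{-1}x=\int_0^\infty e^{-i\lambda t}S(t)x\,dt$, with no minus sign (harmless for the bound $C/\gamma$). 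Second, the step you yourself flag as hardest is asserted rather than executed; for completeness it is short. Fix $\beta>0$ (above the growth bound, which is $\leq 0$ for a contraction semigroup) and use the resolvent identity
\[
((\alpha+is)I-\mathcal{A})^{-1}x=((\beta+is)I-\mathcal{A})^{-1}x+(\beta-\alpha)\,((\alpha+is)I-\mathcal{A})^{-1}((\beta+is)I-\mathcal{A})^{-1}x,
\]
so your strip bound $\|((\alpha+is)I-\mathcal{A})^{-1}\|\leq 2M$ yields $\|((\alpha+is)I-\mathcal{A})^{-1}x\|\leq(1+2M\beta)\,\|((\beta+is)I-\mathcal{A})^{-1}x\|$, while Plancherel at level $\beta$ gives $\int_{\mathbb{R}}\|((\beta+is)I-\mathcal{A})^{-1}x\|^2\,ds=2\pi\int_0^\infty e^{-2\beta t}\|S(t)x\|^2\,dt\leq(\pi/\beta)\|x\|^2$; hence your right-hand side is bounded uniformly for $\alpha\in(0,1/(2M))$, and monotone convergence plus Datko--Pazy concludes. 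A final remark: nothing in your argument uses contractivity beyond the growth bound being finite, so you have in fact proved the theorem for arbitrary $C_0$-semigroups on a Hilbert space, which is the general form of Pr\"uss's result; the contraction hypothesis in the paper's statement is only what its application requires. Your closing observation that Plancherel is the genuinely Hilbertian ingredient, and that the equivalence fails on general Banach spaces, is also accurate.
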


Our first step in proving exponential stability is to show that the imaginary axis is contained at the resolvent set $\varrho(\mathcal{A})$.

\begin{lemma}\label{iR}
The operator $\mathcal{A}$ defined by \eqref{DA1} and \eqref{DA2} 
verifies $i \mathbb{R}\subset \varrho(\mathcal{A})$.
\end{lemma}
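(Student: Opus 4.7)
The plan is to argue by contradiction, combining the compact embedding of $D(\mathcal{A})$ into $\mathcal{H}$ with the dissipation identity \eqref{dis1}. Since $0\in\varrho(\mathcal{A})$ has already been established in the previous theorem, and $D(\mathcal{A})=H_*^4(\Omega)\times[H^2(\Omega)\cap H_0^1(\Omega)]\times H_*^4(\Omega)$ embeds compactly into $\mathcal{H}=[H^2\cap H_0^1]\times L^2\times L^2$ on a bounded domain $\Omega$, the operator $\mathcal{A}$ has compact resolvent. Consequently its spectrum consists only of isolated eigenvalues of finite multiplicity, and to prove $i\mathbb{R}\subset\varrho(\mathcal{A})$ it suffices to show $\mathcal{A}$ has no purely imaginary eigenvalue.

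Suppose for contradiction that $i\omega\in\sigma(\mathcal{A})$ for some $\omega\in\mathbb{R}$, with eigenvector $U=(u,v,\theta)\in D(\mathcal{A})\setminus\{0\}$, so that the resolvent equations \eqref{ResO1}--\eqref{ResO3} hold with $G=0$. Taking the inner product with $U$ and extracting the real part, the dissipation identity \eqref{dis1} gives
\begin{equation*}
0=\mathrm{Re}\langle i\omega U-\mathcal{A}U,U\rangle_{\mathcal{H}}=-\mathrm{Re}\langle\mathcal{A}U,U\rangle_{\mathcal{H}}=\int_{\Omega}\bigl(b|\nabla\theta|^2+d|\Delta\theta|^2\bigr)\,d\Omega.
\end{equation*}
Since $b,d>0$, this forces $\nabla\theta=0$ in $\Omega$, and together with the boundary condition $\theta=0$ on $\partial\Omega$ we conclude $\theta\equiv 0$.

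With $\theta=0$, equation \eqref{ResO3} reduces to $\eta\Delta v=0$; as $\eta\neq 0$ and $v\in H^2\cap H_0^1$, standard elliptic regularity (or Lax--Milgram) yields $v\equiv 0$. Finally, \eqref{ResO1} becomes $i\omega u=0$. If $\omega\neq 0$, then $u\equiv 0$ and $U=0$, a contradiction. The remaining case $\omega=0$ is already excluded by $0\in\varrho(\mathcal{A})$; alternatively, one may use \eqref{ResO2}, which with $v=\theta=0$ gives $c\Delta^2 u=0$, and then $u\in H_*^4$ implies $u\equiv 0$, again contradicting $U\neq 0$. Therefore no purely imaginary number is an eigenvalue of $\mathcal{A}$, and $i\mathbb{R}\subset\varrho(\mathcal{A})$.

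The main (and only nontrivial) step is the cascade $\theta=0\Rightarrow v=0\Rightarrow u=0$, which is straightforward here because the dissipation acts on $\theta$ and the coupling through $\eta\Delta v$ is strong enough to transmit the vanishing to $v$; no delicate estimates are required at this stage, the heavier estimates being deferred to the next lemma where a uniform resolvent bound must be established to invoke Theorem \ref{Pruss}.
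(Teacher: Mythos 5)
Your proof is correct and takes essentially the same route as the paper: compactness of the embedding $D(\mathcal{A})\hookrightarrow\mathcal{H}$ reduces the claim to excluding imaginary eigenvalues, and the dissipation identity \eqref{dis1} forces $\theta=0$, after which the cascade $\theta=0\Rightarrow v=0\Rightarrow u=0$ yields the contradiction. If anything you are slightly more careful than the paper, which leaves the case $\omega=0$ implicit, while you dispose of it both via $0\in\varrho(\mathcal{A})$ and via \eqref{ResO2}.
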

\begin{proof} 
Since the domain $D(\mathcal{A})$ is compactly embedded in the phase space $\mathcal{H}$, it suffices to show that $\mathcal{A}$ has no imaginary eigenvalues. We proceed by contradiction. Suppose there exists a non-zero $U \neq 0$ such that $\mathcal{A}U = i\omega U$. Given $G = 0$, equation \eqref{Diss} implies $\Delta\theta = \Delta^2\theta = 0$. Combined with the boundary conditions, this yields $\theta = 0$. From equation \eqref{ResO3}, we obtain $\Delta v = 0$, and applying the boundary conditions, we conclude that $v = 0$, which further implies $u = 0$. Thus, $U = (u, v, \theta) = (0, 0, 0)$, contradicting the assumption that $U \neq 0$.
This completes the proof.
\end{proof}
\bigskip
	
	Under the above conditions we have: 
		 
	 \begin{theorem}
	 The semigroup $S(t)$ generated by the operator $\mathcal{A}$ is exponentially stable. That is, there exist two positive constants $M$ and $\varepsilon$ such that
	 \begin{align*}
	 	\left\| U(t)\right\| _{\mathcal{H}}\leq Me^{-\varepsilon t}\left\|U(0) \right\| _{\mathcal{H}}.
	 \end{align*}

	 \end{theorem}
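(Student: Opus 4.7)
The plan is to invoke Pr\"uss's characterization (Theorem~\ref{Pruss}). Lemma~\ref{iR} already secures $i\mathbb{R}\subset\varrho(\mathcal{A})$, so the task reduces to establishing the uniform resolvent bound $\sup_{\omega\in\mathbb{R}}\|(i\omega I-\mathcal{A})^{-1}\|_{\mathcal{L}(\mathcal{H})}<\infty$. I would argue by contradiction: suppose the bound fails, so that there exist sequences $\omega_n\in\mathbb{R}$ and $U_n=(u_n,v_n,\theta_n)\in D(\mathcal{A})$ with $\|U_n\|_\mathcal{H}=1$, together with $G_n=(g_{1,n},g_{2,n},g_{3,n})\in\mathcal{H}$, $\|G_n\|_\mathcal{H}\to 0$, verifying \eqref{ResO1}--\eqref{ResO3}. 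Since $\mathcal{A}$ has compact resolvent, any bounded subsequence of $\omega_n$ would, via Lemma~\ref{iR}, force a non-trivial eigenvector on $i\mathbb{R}$, a contradiction; hence $|\omega_n|\to\infty$.

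From the dissipation identity \eqref{Diss} one reads off $b\|\nabla\theta_n\|^2+d\|\Delta\theta_n\|^2=\mathrm{Re}\langle U_n,G_n\rangle_{\mathcal{H}}\to 0$, so $\theta_n\to 0$ in $H^2(\Omega)$. Testing \eqref{ResO2} against $\bar u_n$, substituting $v_n=i\omega_n u_n-g_{1,n}$ from \eqref{ResO1}, integrating by parts with the boundary conditions, and taking real parts, the cross-terms involving $G_n$ and $\nabla\theta_n$ decay, leaving the equipartition relation $c\|\Delta u_n\|^2-\rho\|v_n\|^2\to 0$. Combined with $\|U_n\|_\mathcal{H}^2=c\|\Delta u_n\|^2+\rho\|v_n\|^2+a\|\theta_n\|^2=1$ and $\theta_n\to 0$ in $L^2$, this forces $2\rho\|v_n\|^2\to 1$, so $\|v_n\|$ stays bounded away from zero.

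The decisive step is to derive a second, independent estimate that drives $\|v_n\|\to 0$, contradicting the above. Since the only available dissipation sits in $\theta$, one must propagate it to the mechanical variables through the coupling $\eta\Delta v_n$ in \eqref{ResO3}. Preliminary auxiliary estimates I would extract include $|\omega_n|\|\theta_n\|^2\to 0$ (test \eqref{ResO3} against $\bar\theta_n$ and take imaginary part, the coupling term contributing only $\eta\int v_n\Delta\bar\theta_n\to 0$) and $\omega_n\|\nabla u_n\|^2\to 0$ (test \eqref{ResO3} against $\bar u_n$, integrate the $\Delta v_n$-term by parts, use $v_n=i\omega_n u_n-g_{1,n}$, and note $|\omega_n|\,\|\theta_n\|\,\|u_n\|=o(1)$ because $\|u_n\|=O(|\omega_n|^{-1})$). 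Combining these with the equipartition and the identity $\eta\Delta v_n=g_{3,n}-ia\omega_n\theta_n+b\Delta\theta_n-d\Delta^2\theta_n$ one should obtain $\|v_n\|\to 0$. Pr\"uss's theorem then delivers the constants $M,\varepsilon>0$ in the statement.

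The main obstacle is precisely this last transfer of dissipation from the temperature to the mechanical variables. The natural candidates (e.g., testing \eqref{ResO3} against $(-\Delta)^{-1}\bar v_n$) produce tautologies once one resubstitutes the heat equation, and the standard multipliers $\bar u_n,\bar v_n$ only reproduce the equipartition or relations involving uncontrolled quantities such as $\|\nabla v_n\|$ and $\|\Delta^2\theta_n\|$. The successful argument must combine several multipliers and crucially exploit \emph{both} pieces of the dissipation $\|\nabla\theta_n\|\to 0$ \emph{and} $\|\Delta\theta_n\|\to 0$; the latter is the new ingredient introduced by the $d\Delta^2\theta$ term, and although it is what ultimately destroys analyticity (see Section~4), it is precisely this enhanced control of $\theta_n$ in $H^2$ that closes the exponential-stability argument.
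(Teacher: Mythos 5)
Your framework is the right one and matches the paper's setup step for step: Pr\"uss's criterion (Theorem~\ref{Pruss}) together with Lemma~\ref{iR}, the dissipation identity \eqref{Diss} giving $\theta_n\to 0$ in $H^2$, and the multiplier $\bar u_n$ in \eqref{ResO2} giving the equipartition $c\|\Delta u_n\|^2-\rho\|v_n\|^2\to 0$ (the paper uses exactly this $\bar u$-multiplier identity). But the proof has a genuine gap precisely where you acknowledge it: you never establish the estimate that forces $\|v_n\|\to 0$ (equivalently, by equipartition, $\|\Delta u_n\|\to 0$), and the auxiliary facts you list cannot supply it. Since $\|u_n\|=O(|\omega_n|^{-1})$, interpolation already gives $\omega_n\|\nabla u_n\|^2=O(1)$, so sharpening this to $o(1)$ says nothing about the top-order quantities $\|\Delta u_n\|$ and $\|v_n\|$; likewise $|\omega_n|\,\|\theta_n\|^2\to 0$ never couples back into the mechanical energy. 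The sentence ``one should obtain $\|v_n\|\to 0$'' is exactly the claim to be proved, and you state yourself that your candidate multipliers produce tautologies.

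The missing idea, which is the heart of the paper's argument, is the multiplier $\overline{\Delta u}$ applied to the heat equation \emph{after eliminating $v$ and dividing by $i\omega$}: substituting $v=i\omega u-g_1$ into \eqref{ResO3} and dividing by $i\omega$ yields $\theta-\tfrac{b}{i\omega}\Delta\theta+\tfrac{d}{i\omega}\Delta^2\theta+\eta\Delta u=\tfrac{1}{i\omega}\left(\eta\Delta g_1+g_3\right)$, and pairing with $\overline{\Delta u}$ makes the coupling term produce $\eta\|\Delta u\|^2$ directly. The dangerous term $J_1=-\tfrac{d}{i\omega}\int_\Omega\Delta^2\theta\,\overline{\Delta u}\,d\Omega$ is tamed by moving the bi-Laplacian across, $\int_\Omega\Delta^2\theta\,\overline{\Delta u}\,d\Omega=\int_\Omega\Delta\theta\,\overline{\Delta^2 u}\,d\Omega$, and then substituting $c\Delta^2u=-i\rho\omega v+\eta\Delta\theta+g_2$ from \eqref{ResO2}: the prefactor $1/(i\omega)$ cancels the factor $\omega$ in $i\rho\omega v$, leaving $d\rho\int_\Omega\Delta\theta\,\bar v\,d\Omega$, which is absorbed as $\epsilon\|v\|^2+C_\epsilon\|\Delta\theta\|^2$ with $\|\Delta\theta\|^2\le C\|U\|_{\mathcal H}\|G\|_{\mathcal H}$ from \eqref{Diss}. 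This gives \eqref{DDU}, i.e. $\|\Delta u\|^2\le C_\epsilon\|U\|_{\mathcal H}\|G\|_{\mathcal H}+\epsilon\|v\|^2+C\|G\|_{\mathcal H}^2$, and combining with the $\bar u$-multiplier relation $\rho\|v\|^2\le c\|\Delta u\|^2+c\|\Delta\theta\|^2+C\|U\|_{\mathcal H}\|G\|_{\mathcal H}$ (absorbing $\epsilon\|v\|^2$ for small $\epsilon$) closes the direct bound $\|U\|_{\mathcal H}^2\le C\|U\|_{\mathcal H}\|G\|_{\mathcal H}+C\|G\|_{\mathcal H}^2$ uniformly in $\omega$ --- so no contradiction argument is needed at all. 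Your concluding intuition is correct in one respect: the bound $\|\Delta\theta\|^2\le C\|U\|_{\mathcal H}\|G\|_{\mathcal H}$, i.e. the $d$-part of the dissipation, is indispensable in absorbing $J_1$; but without the $\overline{\Delta u}$ multiplier and the $\omega$-cancellation above, that observation does not close the proof.
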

\begin{proof}
We need to  prove that the resolvent operator is uniformly bounded over the imaginary axis. Using the  equations:
\eqref{ResO3} and \eqref{ResO1}  we get
$$
		ia\omega \theta-b\Delta \theta+d\Delta^2\theta+\eta i\omega\Delta u=\eta \Delta g_1+ g_3 \in L^2,
		$$
and dividing by $i\omega $ we see
$$
		\theta-\frac{b}{i\omega }\Delta \theta+\frac{d}{i\omega }\Delta^2\theta+\eta \Delta u=\frac{1}{i\omega }\left(\eta \Delta g_1+ g_3\right).
		$$
Multiplying the above equation by $\overline{\Delta u}$ we obtain

\begin{eqnarray}
\int_{\Omega}\eta|\Delta u|^2dx&=&-\int_{\Omega}\theta\overline{\Delta u}+\frac{b}{i\omega }\int_{\Omega}\Delta \theta\overline{\Delta u}-\int_{\Omega}\frac{d}{i\omega }\Delta^2\theta\overline{\Delta u}\nonumber\\
&&+\frac{1}{i\omega }\int_{\Omega}\left(\eta \Delta g_1+ g_3\right)\overline{\Delta u}\nonumber\\
&=&-\int_{\Omega}\theta\overline{\Delta u}+\frac{b}{i\omega }\int_{\Omega}\Delta \theta\overline{\Delta u}\underbrace{-\frac{d}{i\omega }\int_{\Omega}\Delta\theta\overline{\Delta^2 u}}_{:=J_1}\nonumber\\
&&+\underbrace{\frac{1}{i\omega }\int_{\Omega}\left(\eta \Delta g_1+ g_3\right)\overline{\Delta u}}_{\leq \frac{\widetilde{c}}{|\omega| }\|U\|_{\mathcal{H}}\|G\|_{\mathcal{H}}}\label{Du}.
\end{eqnarray}
Using equation 
\eqref{ResO2}  we get
\begin{eqnarray*}
J_1&=&-\frac{d}{i\omega }\int_{\Omega}\Delta\theta(\overline{i\rho \omega v-\eta \Delta \theta-g_2})d\Omega\\
&=&d\rho \int_{\Omega}\Delta\theta \,\overline{v}\,d\Omega +\frac{d}{i\omega }\int_{\Omega}\eta|\Delta\theta|^2d\Omega+\frac{d}{i\omega }\int_{\Omega}\Delta\theta\overline{g_2}d\Omega.
\end{eqnarray*}
So, we arrive to 
\begin{eqnarray*}
\left|J_1\right|&\leq &
\widetilde{c}_\epsilon \int_{\Omega}|\Delta\theta|^2\,d\Omega +\epsilon \int_{\Omega}|v|^2\,d\Omega +\widetilde{c}\|G\|_{\mathcal{H}}^2.
\end{eqnarray*}
Hence using \eqref{Diss}
\begin{eqnarray*}
\left|J_1\right|&\leq &
\widetilde{c}_\epsilon\|U\|_{\mathcal{H}}\|G\|_{\mathcal{H}} +\epsilon \int_{\Omega}|v|^2\,d\Omega +\widetilde{c}\|G\|_{\mathcal{H}}^2.
\end{eqnarray*}
Substitution into \eqref{Du} we get 
\begin{eqnarray}\label{DDU}
\int_{\Omega}|\Delta u|^2\;d\Omega
&\leq & \widetilde{c}_\epsilon\|U\|_{\mathcal{H}}\|G\|_{\mathcal{H}} +\epsilon \int_{\Omega}|v|^2\,d\Omega +\widetilde{c}\|G\|_{\mathcal{H}}^2 ,
\end{eqnarray}
where we have used the Poincaré inequality type 
$\|\theta\|\leq \widetilde{c}\|\Delta\theta\|$. Finally, multiplying equation \eqref{ResO2} by $\overline{u}$ we find
$$
		i\rho \int_{\Omega}\omega v\overline{u}\;d\Omega+c\int_{\Omega}|\Delta u |^2\;d\Omega-\eta \int_{\Omega}\Delta \theta \overline{u}\;d\Omega= \int_{\Omega}g_2  \overline{u}\;d\Omega.
		$$
Using equation \eqref{ResO1}  we get
$$
		 \rho \int_{\Omega}|v|^2\;d\Omega=-\rho \int_{\Omega} v\overline{g_1}\;d\Omega+c\int_{\Omega}|\Delta u |^2\;d\Omega-\eta \int_{\Omega}\Delta \theta \overline{u}\;d\Omega- \int_{\Omega}g_2  \overline{u}\;d\Omega.
		$$
The above inequality implies 
$$
  \int_{\Omega}|v|^2\;d\Omega\leq c\int_{\Omega}|\Delta u |^2\;d\Omega+c\int_{\Omega}|\Delta \theta |^2\;d\Omega+\widetilde{c}_\epsilon\|U\|_{\mathcal{H}}\|G\|_{\mathcal{H}}.
		$$
Using \eqref{DDU} and  \eqref{Diss} we get
$$
  \int_{\Omega}|v|^2\;d\Omega\leq \widetilde{c}_\epsilon\|U\|_{\mathcal{H}}\|G\|_{\mathcal{H}}+\widetilde{c}\|G\|_{\mathcal{H}}^2 ,
		$$
for $\epsilon$ small. 
The above inequality implies that 
$$
\|U\|_{\mathcal{H}}^2=  \int_{\Omega}|\left( v|^2+|\Delta u|^2+|\theta |^2\right) \;d\Omega\leq \widetilde{c}_\epsilon\|U\|_{\mathcal{H}}\|G\|_{\mathcal{H}}+\widetilde{c}\|G\|_{\mathcal{H}}^2. 
		$$
Therefore, it follows the existence of a positive constant $M$ such that $\|U\|_{\mathcal{H}}\leq M \|G\|_{\mathcal{H}}$ and the proof is complete.

\end{proof}

		\section{Lack of Differentiability. }

		In this section, we will show that the semigroup associated with system \eqref{eq1}--\eqref{ci} is not  differentiable \cite{Pazy}
  (not immediately differentiable \cite{EngelNagel} ). To see this, we recall the following results.

\begin{theorem} \label{diff}
	Let $S=(S(t))_{t \geq 0}$ be an immediately differentiable semigroup on the Banach space $X$, then $S(t)$ is an immediately norm-continuous semigroup (see \cite{EngelNagel}, Definition 4.17 page 112).
\end{theorem}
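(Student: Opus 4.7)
My plan is to give the standard argument that immediate differentiability forces the derivative $AS(t)$ to be a bounded operator for each $t>0$, locally uniformly in $t$, and then to read off norm continuity from the fundamental theorem of calculus.

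First I would recall the precise meaning of immediate differentiability: for every $x\in X$ and every $t>0$, the orbit $t\mapsto S(t)x$ is differentiable, and by the standard semigroup identities this derivative equals $AS(t)x$, where $A$ is the generator. In particular $S(t)X\subset D(A)$ for $t>0$, so $AS(t)$ is defined on all of $X$. Since $A$ is closed and $S(t)$ is bounded, $AS(t)$ is a closed operator whose domain is the whole space; the closed graph theorem then gives $AS(t)\in \mathcal{L}(X)$ for every $t>0$.

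Next I would establish local boundedness of $t\mapsto\|AS(t)\|$ on $(0,\infty)$. The key identity is the commutation $AS(t)=S(t-s)AS(s)$ for $0<s<t$, which gives $\|AS(t)\|\le\|S(t-s)\|\,\|AS(s)\|$; combined with the fact that $\|S(\cdot)\|$ is bounded on compact subsets of $[0,\infty)$, this shows that $\|AS(t)\|$ is bounded on any compact subset of $(0,\infty)$. (Alternatively, apply the Banach–Steinhaus theorem to the pointwise–bounded family $\{AS(t):t\in[a,b]\}$, using that $t\mapsto AS(t)x$ is continuous on $(0,\infty)$ for each fixed $x$.)

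Finally, for $0<s<t$ and any $x\in X$, integration of the derivative gives
\begin{equation*}
S(t)x-S(s)x=\int_s^t AS(r)x\,dr,
\end{equation*}
hence
\begin{equation*}
\|S(t)-S(s)\|_{\mathcal{L}(X)}\le (t-s)\sup_{r\in[s,t]}\|AS(r)\|_{\mathcal{L}(X)},
\end{equation*}
which by the local boundedness of the previous step yields norm continuity of $t\mapsto S(t)$ on $(0,\infty)$, i.e. immediate norm continuity. The main obstacle I anticipate is the boundedness step: showing that the pointwise derivative $AS(t)$ is in fact a bounded operator with locally uniform norm estimates, since immediate differentiability is a priori only a pointwise property on $X$. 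Once the closed graph theorem plus the semigroup cocycle identity do this work, the rest is routine.
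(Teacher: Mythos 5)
Your proof is correct and follows essentially the same route as the paper, which simply asserts that immediate differentiability upgrades to differentiability in the uniform operator norm and hence gives immediate norm continuity. You supply the standard details behind that assertion (closed graph theorem for $AS(t)$, the identity $AS(t)=S(t-s)AS(s)$ for local uniform bounds, and integration of the derivative), so your argument is a fleshed-out version of the paper's two-line proof rather than a different approach.
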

\begin{proof}
If $S(t)$ is immediately differentiable then $S(t)$ is immediately differentiable with the uniform norm of $\mathcal{L}(X)$, for any $t>0$. This implies that the semigroup is immediately norm-continuous.
\end{proof}

\begin{theorem}\label{diff2}
	If $\mathcal{A}$ is the generator of an immediately norm-continuous exponentially stable semigroup then
	$$
	\lim_{\lambda\rightarrow\pm\infty}\|(i\lambda I-\mathcal{A})^{-1}\|=0.
	$$
\end{theorem}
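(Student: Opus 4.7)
The plan is to represent the resolvent via the Laplace transform of the semigroup. Since $S(t)$ is exponentially stable we have $\|S(t)\| \leq Me^{-\varepsilon t}$, and hence for every $\lambda \in \mathbb{R}$ the integral $\int_0^\infty e^{-i\lambda t}S(t)\,dt$ converges absolutely in operator norm and equals $(i\lambda I - \mathcal{A})^{-1}$. The theorem then reduces to showing that this operator-valued Fourier-type integral tends to $0$ in $\mathcal{L}(X)$ as $|\lambda| \to \infty$ --- a vector-valued Riemann--Lebesgue statement. The crucial role of the immediate norm-continuity hypothesis will be to upgrade the conclusion from strong convergence (which holds for any $C_0$-semigroup) to norm convergence.

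My strategy would be a three-piece split: $\int_0^\delta + \int_\delta^T + \int_T^\infty$. The far tail is controlled by $\int_T^\infty Me^{-\varepsilon t}\,dt = (M/\varepsilon)e^{-\varepsilon T}$, which can be made smaller than any prescribed $\eta > 0$ by choosing $T$ large, uniformly in $\lambda$. The short-time piece $\int_0^\delta$ is bounded by $\delta \sup_{0 \le t \le \delta}\|S(t)\|$, i.e.\ $O(\delta)$ by uniform boundedness of the semigroup on compacta, and therefore is also smaller than $\eta$ for $\delta$ small, uniformly in $\lambda$.

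The genuine work lies in the middle piece $\int_\delta^T e^{-i\lambda t}S(t)\,dt$. Here I would use that on the compact interval $[\delta,T]$ the map $t \mapsto S(t)$ is uniformly continuous in operator norm, and approximate it by a step function. Given $\eta$, pick a partition $\delta = t_0 < t_1 < \cdots < t_N = T$ fine enough that $\|S(t) - S(t_j)\| < \eta$ for $t \in [t_j, t_{j+1}]$; then replacing $S(t)$ by $\sum_j S(t_j)\chi_{[t_j, t_{j+1})}(t)$ produces an error of at most $\eta(T-\delta)$ in operator norm, while for each constant block one computes
\[
\left\| S(t_j)\int_{t_j}^{t_{j+1}} e^{-i\lambda t}\,dt \right\|
= \left\| S(t_j)\,\frac{e^{-i\lambda t_{j+1}} - e^{-i\lambda t_j}}{-i\lambda} \right\|
\leq \frac{2M}{|\lambda|},
\]
which vanishes as $|\lambda| \to \infty$. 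Combining the three bounds: given $\eta>0$, first choose $T$ and $\delta$ to dominate the tails, then choose $|\lambda|$ large enough that the step-function contribution falls below $\eta$. Arbitrariness of $\eta$ yields $\|(i\lambda I - \mathcal{A})^{-1}\| \to 0$.

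The main obstacle is precisely the passage from the scalar Riemann--Lebesgue lemma to its operator-valued counterpart on the middle segment. Immediate norm-continuity is what enables the uniform piecewise-constant approximation of $S(t)$ on $[\delta,T]$; without it only the strong-convergence analogue $\|(i\lambda I - \mathcal{A})^{-1}x\| \to 0$ for each $x \in X$ would be reachable, which is strictly weaker than the required norm statement and would not force the non-differentiability conclusion that Theorem \ref{diff2} is meant to supply.
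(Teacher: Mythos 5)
Your proof is correct: the representation $(i\lambda I-\mathcal{A})^{-1}=\int_0^\infty e^{-i\lambda t}S(t)\,dt$ is valid under exponential stability, and your three-piece split with the uniform norm-continuity of $t\mapsto S(t)$ on $[\delta,T]$ driving an operator-valued Riemann--Lebesgue estimate is exactly the standard argument. The paper gives no proof of its own --- it simply cites Corollary 4.19 of Engel--Nagel --- and your argument is essentially the proof of that cited result, so you have reconstructed the intended reasoning.
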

\begin{proof}
See \cite{EngelNagel},  Corollary 4.19 page 114.
\end{proof}

\begin{theorem} \label{diff3}
	The semigroup  $S=(S(t))_{t \geq 0}$ defined by system \eqref{eq1}--\eqref{ci} is not differentiable
\end{theorem}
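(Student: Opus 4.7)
The plan is to argue by contradiction using the two-step implication provided by Theorems~\ref{diff} and \ref{diff2}: if $S(t)$ were immediately differentiable it would be immediately norm-continuous, which in turn forces $\|(i\lambda I-\mathcal{A})^{-1}\|\to 0$ as $|\lambda|\to\infty$. Hence the task reduces to exhibiting a sequence of real frequencies $\omega_n\to\infty$ together with bounded data $G_n\in\mathcal{H}$ for which the resolvent images $U_n=(i\omega_n I-\mathcal{A})^{-1}G_n$ satisfy $\|U_n\|_{\mathcal{H}}\ge c_0\|G_n\|_{\mathcal{H}}$ uniformly, with $c_0>0$; this contradicts Theorem~\ref{diff2} and rules out immediate differentiability.

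To construct such a sequence I would work in the $L^2$-normalized Dirichlet Laplacian basis $\{\varphi_n\}$, $-\Delta\varphi_n=\mu_n\varphi_n$. Since each $\varphi_n$ automatically satisfies $\varphi_n=\Delta\varphi_n=0$ on $\partial\Omega$, the separable ansatz $U_n=(A_n\varphi_n,\,B_n\varphi_n,\,C_n\varphi_n)$ lies in $D(\mathcal{A})$. Taking the forcing $G_n=(0,\varphi_n,0)$ and tuning $\omega_n$ to the purely mechanical resonance $\omega_n=\sqrt{c/\rho}\,\mu_n$, the resolvent system \eqref{ResO1}--\eqref{ResO3} becomes a $3\times 3$ algebraic system in $(A_n,B_n,C_n)$. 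Eliminating $B_n=i\omega_n A_n$ from the first row and solving the thermal row for $C_n$ in terms of $A_n$ collapses it to a single scalar identity of the form $\rho P(\omega_n,\mu_n)A_n=1$, with
\[
P(\omega_n,\mu_n)=\Bigl(\tfrac{c\mu_n^2}{\rho}-\omega_n^2\Bigr)+\frac{i\eta^2\mu_n^2\omega_n}{\rho\bigl(ia\omega_n+(b+d\mu_n)\mu_n\bigr)}.
\]

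The decisive asymptotic step is then the observation that at $\omega_n^2=c\mu_n^2/\rho$ the first bracket vanishes, while in the denominator of the remaining term the bi-Laplacian contribution $d\mu_n^2$ dominates both $ia\omega_n=O(\mu_n)$ and $b\mu_n$, giving $P(\omega_n,\mu_n)\sim i\eta^2\omega_n/(\rho d)$. Therefore $|A_n|=O(1/\omega_n)$, $|B_n|=\omega_n|A_n|\to d/\eta^2$, and $|C_n|=O(1/\mu_n)\to 0$, so $\|U_n\|_{\mathcal{H}}^2\ge \rho|B_n|^2$ stays bounded below while $\|G_n\|_{\mathcal{H}}^2=\rho$ remains bounded above, delivering the contradiction. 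The main technical hurdle is to carry the algebra transparently enough to highlight the role of the new parameter: with $d=0$ the denominator of $P$ would grow only like $\mu_n$, forcing $|B_n|\to 0$ in agreement with the well-known analyticity of the $d=0$ semigroup, so the bi-Laplacian thermal term is precisely what saturates $|B_n|$ at a nonzero level and destroys immediate differentiability.
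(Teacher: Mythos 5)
Your proposal is correct and follows essentially the same route as the paper's own proof: the same forcing $G_n=(0,\phi_n,0)$ in the Dirichlet eigenbasis, the same separable ansatz in $D(\mathcal{A})$, the same resonant tuning $\omega_n=\sqrt{c/\rho}\,\lambda_n$ killing the mechanical bracket, and the same asymptotic saturation of the velocity amplitude at $d/\eta^2>0$, combined with Theorems \ref{diff} and \ref{diff2} (plus the exponential stability from Section 3) to rule out immediate differentiability. The only cosmetic difference is the elimination order --- you collapse the system to one scalar equation $\rho P(\omega_n,\mu_n)A_n=1$ and extract the asymptotics of $P$, whereas the paper first reads off the thermal amplitude exactly as $(\eta\lambda_n)^{-1}$ and then solves for $A_n$ --- which is algebraically equivalent.
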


\begin{proof}
To show that  the semigroup is not   differentiable we only need to prove that there exists a sequence $\omega_n$ of real numbers such that 
	\begin{align} \label{condi}
		\lim_{n\to \infty}\left\|(i\omega_nI-\mathcal{A})^{-1} \right\|>0. 
	\end{align} 	
	We now consider $G_n=(0,\phi_n,0)$,  where	$\phi_n$ are the unitary eigenfunctions of the Laplace operator with homogeneous Dirichlet conditions on the boundary of $\Omega$. That is $-\Delta \phi_n=\lambda_n\phi_n$. 
	Let $U_n=(u_n,v_n,\theta_n)\in D(\mathcal{A})$ the unique solution of the equation
	\begin{align*}
		\left( i\omega_n I-\mathcal{A}\right)U_n=G_n. 
	\end{align*}
		In terms of the components of the system we have 
	\begin{align*}
		i\omega_n u_n-v_n&=0\\
		i\rho \omega_n v_n+\Delta \left( c\Delta u_n-\eta \theta_n\right)&=\phi_n\\
		ia\omega_n\theta_n+\Delta(d\Delta \theta_n-b\theta_n+\eta v_n)&=0. 
	\end{align*}
		To solve the above system we look for the solutions of the form  
	\begin{align*}
		u_n=A_n \phi_n, \ \ v_n=i\omega_n A_n\phi_n,\ \ \theta_n= B_n\phi_n.
	\end{align*}
Substitution of $(u_n,v_n,\theta_n)$ into the above system yields 
		\begin{align*}
	i \rho \omega_n \left(i\omega_n A_n \phi_n \right) +\Delta (cA_n\Delta \phi_n-\eta B_n \phi_n)= \phi_n,\\
	i a \omega_n B_n\phi_n+\Delta (d\textcolor{red}{ B_n\Delta\phi_n}-bB_n\phi_n+\eta i \omega_nA_n\phi_n)=0.
	\end{align*}
The above system is equivalent to 
	\begin{align}
		A_n(c\lambda^2_n-\rho \omega_n^2)+B_n \eta \lambda_n=1,\label{Alg1}\\
		-i\eta \omega_n\lambda _nA_n+B_n(ia \omega_n+ d\lambda_n^2 +b\lambda_n)=0\label{Alg2}
	\end{align}
	where $\lambda_n$ are eigenvalues corresponding to the eigenfunctions $\phi_n$. We recall that $\lambda_n\to \infty \ \ (\text{as} \ n\to \infty)$. Taking 
	$$\omega_n=\pm\sqrt{\dfrac{c}{\rho}}\lambda_n.$$
	We have that 
	$$B_n=(\eta \lambda_n)^{-1}.$$
Substitution of $B_n$ into \eqref{Alg2} yields  
	\begin{align*}
		A_n=\pm\dfrac{ia \omega_n+d\lambda_n^2+b\lambda_n}{i\eta^2 \lambda_n^3 \sqrt{\dfrac{c}{\rho }}}.
	\end{align*}
	Hence we have that 
	$$
	U_n=(u_n,v_n,\theta_n)=(A_n \phi_n, i\omega_n A_n\phi_n,  B_n\phi_n).
	$$
Therefore,
	$$
	\|U_n\|_{\mathcal{H}}^2\geq \|v_n\|_{L^2}^2 \approx  \dfrac{a^2 \omega_n^2+(d\lambda_n^2+b\lambda_n)^2}{\eta^4 \lambda_n^6 {\dfrac{c}{\rho }}}\omega_n^2\rightarrow \frac{d^2}{\eta^4}>0.
	$$
Since $U_n=(i\omega_n-\mathcal{A})^{-1}G_n$
	we see that  condition \eqref{condi} holds.
	\end{proof}

	\begin{remark}
We also note that the semigroup is not analytic. This is a bit surprising when compared to the results obtained at \cite{Z1}.
		\end{remark}
		\section{Impossibility of localization } 
		
		Although the semigroup is not analytic, we can employ alternative arguments to demonstrate that the only solution that is identically zero on a set of non-zero measure is the trivial solution. To this end, it is sufficient to prove  the uniqueness of solutions for the backward-in-time system
		\begin{align*}
			\rho u_{tt}=-c \Delta ^2 u+\eta \Delta \theta,\\
			a\theta_t=d\Delta^2 \theta-b\Delta \theta-\eta\Delta u_t.
		\end{align*}
		with homogeneous Dirichlet boundary conditions \eqref{fr1}.
		
				To show the uniqueness of the solutions to this problem it is enough to show that the only solution for the problem with the  null initial conditions
		\begin{align*}
			u(x,0)=u_t(x,0)=\theta(x,0)=0, \ \ \forall x\in \Omega.
		\end{align*}
		As usual, we will use the argument from the Lagrange identities.
		First, we consider the functions  
		\begin{align*}
		\mathscr{L}_1(t)=\dfrac{1}{2}\int_{\Omega}\left( \rho \left|u_t \right|^2+c\left| \Delta u \right|^2+a \left|\theta \right|^2   \right) \ d\Omega,
		\end{align*}
				and 
				\begin{align*}
		\mathscr{L}_2(t)=\dfrac{1}{2}\int_{\Omega}\left( \rho \left|u_t \right|^2+c\left| \Delta u \right|^2-a \left|\theta \right|^2   \right) \ d\Omega.
				\end{align*}
								We get 
								\begin{align*}
			\textperiodcentered\dot{\mathscr{L}}_1(t)=\int_{\Omega}\left( b\left|\nabla \theta \right|^2+d\left|\Delta \theta \right|^2   \right) \  d\Omega,
			\end{align*}
			and 
					\begin{align*}
				\dot{\mathscr{L}}_2(t)=-\int_{\Omega}\left( b\left|\nabla \theta \right|^2+d\left|\Delta \theta \right|^2   \right) \  d\Omega +2\eta \int_{\Omega} u_t\Delta \theta \ d\Omega.
			\end{align*}
	In order to obtain an alternating expression to the definition of the function $\mathscr{L}_2(t)$ we consider the following relations 
	\begin{align*}
		\int_{0}^{t}\int_{\Omega} \rho u_{tt}(s)u_t(2t-s) \ d\Omega ds+\int_{0}^{t}\int_{\Omega} c\Delta u(s)\Delta u_t(2t-s)\ d\Omega ds&=\int_{0}^{t}\int_{\Omega}\eta \Delta \theta (s) u_t(2t-s) \ d\Omega ds,\\ \\
		\int_{0}^{t}\int_{\Omega} \rho u_{tt}(2t-s)u_t(s) \ d\Omega ds+\int_{0}^{t}\int_{\Omega} c\Delta u(2t-s)\Delta u_t(s)\ d\Omega ds&=\int_{0}^{t}\int_{\Omega}\eta \Delta \theta (2t-s) u_t(s) \ d\Omega ds,
			\end{align*}	
				\begin{align*}	
	\int_{0}^{t}\int_{\Omega} a\theta _t(s)\theta(2t-s) \ d\Omega ds&=\int_{0}^{t}\int_{\Omega} d\Delta \theta(s)\Delta \theta(2t-s) \ d\Omega ds +\int_{0}^{t}\int_{\Omega} b\nabla \theta(s)\nabla \theta(2t-s) \ d\Omega ds\\&-\int_{0}^{t}\int_{\Omega} \eta \Delta u_t(s)\theta (2t-s) \ d\Omega ds,
	\end{align*}	
	and
	\begin{align*}
			\int_{0}^{t}\int_{\Omega} a\theta _t(2t-s)\theta(s) \ d\Omega ds&=\int_{0}^{t}\int_{\Omega} d\Delta \theta(2t-s)\Delta \theta(s) \ d\Omega ds +\int_{0}^{t}\int_{\Omega} b\nabla \theta(2t-s)\nabla \theta(s) \ d\Omega ds\\&-\int_{0}^{t}\int_{\Omega} \eta \Delta u_t(2t-s)\theta (s) \ d\Omega ds.
	\end{align*}	
	By combining \textcolor{red}{these} equalities with alternative signs, integrating with respect to time, and taking into account the initial conditions, we find the following relation:
	\begin{align*}
		\int_{\Omega}(a\left| \theta\right| ^2+c\left|\Delta u \right|^2 ) \ d\Omega= \int_{\Omega} \rho \left|u_t \right| ^2 \ d\Omega.
	\end{align*} 
	If we go to the definition of the function $\mathscr{L}_2(t)$ and consider this previous equality we find
	\begin{align*}
		\mathscr{L}_2(t)=\int_{\Omega} c \left|\Delta u \right| ^2 \ d\Omega.
	\end{align*}
			Therefore if we \textcolor{red}{select} $\epsilon<1$ and consider 
			\begin{align*}
				\mathscr{L}(t)=	\mathscr{L}_2(t)+\epsilon 	\mathscr{L}_1(t),
			\end{align*}
			we see that
			\begin{align*}
				\dot{\mathscr{L}}(t)&=-(1-\epsilon)\int_{\Omega}\left(b \left| \nabla \theta\right|^2+ d\left| \Delta \theta\right| ^2\right) \ d\Omega  + 2\eta \int_{\Omega} u_t \Delta \theta \ d\Omega\\
				&\leq k \int_{\Omega} \left|u_t \right| ^2 \ d\Omega, \ \  k>0 ,
			\end{align*}
			after the use of the Holder inequality.			As $\epsilon>0$ we can also see that
						\begin{align*}
				\dot{\mathscr{L}}(t) \leq k^* \mathscr{L}(t),
			\end{align*}
			where $k^*$ is a calculable constant.
			It then follows that
			$$\mathscr{L}(t)\leq \mathscr{L}(0)e^{k^*t}=0,$$
			as $\mathscr{L}(0)=0.$
			Therefore, we obtain that $\mathscr{L}(t)=0$ for every $t\geq 0$ and we can conclude:
			\begin{theorem}
			Let $(u,\alpha)$ be a solution to the problem determined for the backward in time system with null initial conditions. Then
			$(u,\theta)=(0,0),$
			for every $t\geq 0.$
			\end{theorem}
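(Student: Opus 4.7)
The plan is to reduce the uniqueness problem to the case of null initial data by linearity, and then to build a Lyapunov-type quantity that vanishes at $t=0$ and satisfies a linear Gronwall inequality. The first step is to introduce two quadratic functionals: the standard energy
\begin{align*}
\mathscr{L}_1(t)=\frac{1}{2}\int_{\Omega}\bigl(\rho|u_t|^2+c|\Delta u|^2+a|\theta|^2\bigr)\,d\Omega,
\end{align*}
and the sign-modified variant
\begin{align*}
\mathscr{L}_2(t)=\frac{1}{2}\int_{\Omega}\bigl(\rho|u_t|^2+c|\Delta u|^2-a|\theta|^2\bigr)\,d\Omega.
\end{align*}
For $\mathscr{L}_1$ one directly obtains the backward-in-time counterpart of the dissipation identity, so $\dot{\mathscr{L}}_1\geq 0$. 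For $\mathscr{L}_2$ the derivative contains an indefinite coupling term $2\eta\int_\Omega u_t\Delta\theta\,d\Omega$, which is what prevents a naive energy argument.

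Next I would exploit the symmetry of the elastic part via a Lagrange identity. The idea is to multiply the $u$-equation at time $s$ by $u_t(2t-s)$ and the $u$-equation at $2t-s$ by $u_t(s)$, and do the analogous pairing for the $\theta$-equation. Integrating each pair over $s\in[0,t]$ and subtracting with alternating signs, the symmetric elastic and conductive contributions cancel in pairs while the $\theta$- and $u_t$-couplings combine so that after using the null initial data the identity
\begin{align*}
\int_{\Omega}\bigl(a|\theta|^2+c|\Delta u|^2\bigr)\,d\Omega=\int_{\Omega}\rho|u_t|^2\,d\Omega
\end{align*}
appears. Feeding this into the definition of $\mathscr{L}_2$ collapses it to the clean expression $\mathscr{L}_2(t)=\int_\Omega c|\Delta u|^2\,d\Omega$, which is non-negative and controls the elastic norm.

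With that reformulation in hand I would set $\mathscr{L}(t)=\mathscr{L}_2(t)+\epsilon\mathscr{L}_1(t)$ for small $\epsilon\in(0,1)$. The sign-opposite dissipations in $\dot{\mathscr{L}}_1$ and $\dot{\mathscr{L}}_2$ combine to a good term $-(1-\epsilon)\int_\Omega(b|\nabla\theta|^2+d|\Delta\theta|^2)\,d\Omega$, leaving only the coupling $2\eta\int_\Omega u_t\Delta\theta\,d\Omega$. Applying Cauchy--Schwarz (or Hölder) and absorbing the $\Delta\theta$ term into the good dissipative term yields $\dot{\mathscr{L}}(t)\leq k\int_\Omega|u_t|^2\,d\Omega$ for some $k>0$, and since $\int_\Omega|u_t|^2\,d\Omega$ is controlled by $\mathscr{L}_1(t)\leq\epsilon^{-1}\mathscr{L}(t)$, we reach $\dot{\mathscr{L}}(t)\leq k^{*}\mathscr{L}(t)$.

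The conclusion then follows from Gronwall's inequality: $\mathscr{L}(t)\leq\mathscr{L}(0)e^{k^{*}t}=0$, which forces $\mathscr{L}_1(t)\equiv 0$ and hence $u\equiv 0$, $\theta\equiv 0$. The main obstacle I expect is the Lagrange identity step: the time-reflected pairings must be carried out carefully so that the boundary terms arising from the integration by parts against $\Delta^2$ and $\Delta$ vanish (using the Dirichlet conditions \eqref{fr1}) and so that the cross terms cancel exactly, leaving the identity relating $|\Delta u|^2$, $|\theta|^2$ and $|u_t|^2$. Everything else (the dissipation sign manipulations, the Hölder absorption, and Gronwall) is routine once that identity is in place.
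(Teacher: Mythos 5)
Your proposal is correct and follows the paper's own proof essentially step for step: the same functionals $\mathscr{L}_1$ and $\mathscr{L}_2$, the same Lagrange-identity pairings at times $s$ and $2t-s$ yielding $\int_{\Omega}\bigl(a|\theta|^2+c|\Delta u|^2\bigr)\,d\Omega=\int_{\Omega}\rho|u_t|^2\,d\Omega$ and hence $\mathscr{L}_2(t)=\int_{\Omega}c|\Delta u|^2\,d\Omega$, and the same combination $\mathscr{L}=\mathscr{L}_2+\epsilon\mathscr{L}_1$ with H\"older absorption and Gronwall to conclude $\mathscr{L}\equiv 0$. The only point you flag as delicate, the exact cancellation in the time-reflected identities under the boundary conditions \eqref{fr1}, is precisely how the paper carries it out, so there is no gap.
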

				\section{Uniqueness and Instability }

		In this section, we analyze the system defined by equations \eqref{eq1} and \eqref{eq2} in the case where $c < 0$. In this scenario, the problem is not expected to be well-posed in the Hadamard sense due to its instability. Nevertheless, we prove the uniqueness of solutions.

To study the problem, it is convenient to integrate equation \eqref{eq2} with respect to time, yielding:
		\begin{align}
			a\theta=b\Delta \alpha-d\Delta^2\alpha-\eta  \Delta u+a\theta(0)+\eta \Delta u(0),\label{eqq1}
		\end{align}
		where 
		\begin{align*}
			\alpha(x,t)=\int_{0}^{t}\theta(x,s) \ ds.
		\end{align*}
			We note that if $\Phi (x)$ is the solution to the equation
	\begin{align*}
		b\Delta \Phi-d\Delta^2 \Phi=a\theta (0)+\eta \Delta u(0),
	\end{align*}
	with null Dirichlet boundary conditions on $\Omega,$ we can write the equation \eqref{eqq1} as
	\begin{align*}
		a\theta=b\Delta\Psi-d\Delta^2\Psi-\eta \Delta u,
	\end{align*}
	where $\Psi=\alpha+\Phi$.
	We define the function
	\begin{align*}
		\mathscr{F}(t)=\int_{\Omega} \rho u^2 \ d \Omega+\int_{0}^{t}\int_{\Omega}\left(b\left| \nabla \Psi\right|^2+d\left|\Delta \Psi \right|^2   \right) \ d\Omega ds+ \omega^2 (t+t_0).
	\end{align*}
	
	Here $\omega $ and $t_0$ are two nonnegative constants that will be selected later. We have
	\begin{align*}
		\dot{\mathscr{F}}(t)=2\int_{\Omega} \rho u u_t \ d\Omega+2\int_{0}^{t}\int_{\Omega} \left(b\nabla \Psi\nabla \theta+d\Delta \Psi \Delta \theta \right) \ d \Omega \ d s+\int_{\Omega}\left( b\left|\nabla \Phi \right|^2+d \left|\Delta \Phi \right|^2 \right)  d\Omega+2\omega(t+t_0),
	\end{align*}
	and
		\begin{align*}
		\ddot{\mathscr{F}}(t)=2\int_{\Omega} \left( \rho \left| u_t\right| ^2+\rho u u_{tt} \right) d\Omega+2\int_{\Omega} \left(b\nabla \Psi\nabla \theta+d\Delta \Psi \Delta \theta \right) \ d \Omega +2\omega(t+t_0).
	\end{align*}
	We note that
		\begin{align*}
	\int_{\Omega} \left(  \rho u u_{tt} +b\nabla \Psi\nabla \theta+d\Delta \Psi \Delta \theta \right) \ d \Omega&=-\int_{\Omega} \left(c\left|\Delta u \right| ^2+a\left|\theta \right|^2  \right) d\Omega \\
	&=\int_{\Omega} \rho \left|u_t \right| ^2 d\Omega+2\int_{0}^{t}\int_{\Omega}\left( b\left| \nabla \theta\right|^2+d\left| \Delta \theta\right|^2  \right) d\Omega ds-E(0),
	\end{align*}
	where the second equality follows from \eqref{eq1.5}. We obtain
	\begin{align*}
		\ddot{\mathscr{F}}(t)=4\int_{\Omega}\rho \left| u_t\right|^2 d\Omega+4\int_{0}^{t}\int_{\Omega}\left( b\left| \nabla \theta\right|^2+d\left|\Delta \theta \right| ^2 \right)  d\Omega d s-2\left( E(0)-\omega\right) .
			\end{align*}
	The use of the Schwarz inequality implies that
	\begin{align}
	\ddot{\mathscr{F}}(t) \mathscr{F}(t)-\left( \dot{\mathscr{F}}(t)-\nu\right)^2 \geq -2\left(\omega+E(0) \right) \mathscr{F}(t)\label{eqqqq}
	\end{align}
	where
	\begin{align*}
		\nu=\int_{\Omega}\left(b\left| \nabla \Phi \right|^2+d\left| \Delta \Phi \right|^2   \right) d\Omega. 
	\end{align*}
	Now, we can obtain the uniqueness of the solutions from \eqref{eqqqq}. We know that to prove the uniqueness it is sufficient to show that the only solution to null initial conditions is the null solution. In this case we have that $E(0)=\nu=0$ and if we take $\omega=0$ we get
	$$	\ddot{\mathscr{F}}(t) \mathscr{F}(t)\geq (\dot{\mathscr{F}}(t))^2$$
	This inequality brings to the estimate. See \cite{Ames, flavin}
	$$\mathscr{F}(t)\leq \mathscr{F}(0)^{1-t/t_1}\mathscr{F}(t_1)^{t/t_1}, \ \ 0\leq t\leq t_1.$$
	
	As $\mathscr{F}(0)=0$ we conclude that $\mathscr{F}(t)=0$ for $ 0\leq t\leq t_1$ which shows that $u=\theta =0$ for every $ 0\leq t\leq t_1$ . And the uniqueness result is obtained.\\
	To prove the instability of the solutions we use \eqref{eqqqq} and select $\omega=-E(0)$. We also select $t_0$ large enough to guarantee that $\dot{\mathscr{F}}(t)> 2\nu$.
	 We see
	 	 \begin{align*}
	 	\mathscr{F}(t)\geq \dfrac{\dot{\mathscr{F}}(0)\mathscr{F}(0)}{\dot{\mathscr{F}}(0)-2\nu} \exp \left( \dfrac{\dot{\mathscr{F}}(0)-2\nu}{\mathscr{F}(0)}t\right) -2\nu \dfrac{	\mathscr{F}(0)}{\dot{\mathscr{F}}(0)-2\nu}
	 \end{align*}
	 which guarantee the exponential instability.
	 A similar estimate can be obtained when $E(0)=0$ but $\dot{\mathscr{F}}(0)>0.$
	 
	 \begin{theorem}
	 For the problem determined by the system \eqref{eq1},\eqref{eq2}  with the homogeneous Dirichlet boundary conditions in the case $c<0$, we have
	 \begin{enumerate}
	 \item There exists at most one solution.
	 \item When $E(0)<0$ or $E(0)=0$ but $\dot{\mathscr{F}}(0)>0$ the solution is exponentially unstable.
	 \end{enumerate}
	 \end{theorem}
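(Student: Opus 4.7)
My plan is to establish both parts from a single second-order differential inequality for the auxiliary functional $\mathscr{F}(t)$ introduced above, which is the standard logarithmic-convexity route for ill-posed problems of this kind. Since $c<0$, the energy $E(t)$ is no longer a norm on the phase space, so energy methods cannot work directly; instead I exploit the structure of the equations through Lagrange-type manipulations, after time-integrating \eqref{eq2} and absorbing the initial-data terms into the elliptic solution $\Phi$, so that the heat equation reads $a\theta = b\Delta\Psi - d\Delta^2\Psi - \eta\Delta u$ with $\Psi = \alpha + \Phi$.

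Differentiating $\mathscr{F}$ twice, using \eqref{eq1} together with the conservation identity \eqref{eq1.5}, one arrives at
\begin{equation*}
\ddot{\mathscr{F}}(t) = 4\int_{\Omega}\rho|u_t|^2\,d\Omega + 4\int_0^t D(s)\,ds - 2\bigl(E(0)-\omega\bigr).
\end{equation*}
A Cauchy--Schwarz estimate bounding $\bigl(\dot{\mathscr{F}}(t)-\nu\bigr)^2$ against the product of $\mathscr{F}(t)$ and the first two terms above then yields the master inequality
\begin{equation*}
\ddot{\mathscr{F}}(t)\mathscr{F}(t) - \bigl(\dot{\mathscr{F}}(t)-\nu\bigr)^2 \geq -2\bigl(\omega+E(0)\bigr)\mathscr{F}(t).
\end{equation*}

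For part (1), I take null initial data so that $E(0)=\nu=0$ and set $\omega=0$. The master inequality reduces to $\ddot{\mathscr{F}}\mathscr{F}\geq\dot{\mathscr{F}}^2$, i.e.\ $\log\mathscr{F}$ is convex, which gives the three-lines estimate $\mathscr{F}(t)\leq\mathscr{F}(0)^{1-t/t_1}\mathscr{F}(t_1)^{t/t_1}$ on $[0,t_1]$. With $\mathscr{F}(0)=0$, this forces $\mathscr{F}\equiv 0$ on any bounded interval, and hence $u\equiv 0$, $\theta\equiv 0$. Uniqueness of the original problem follows by linearity.

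For part (2), I choose $\omega=-E(0)\geq 0$ so that the right-hand side of the master inequality vanishes, and then pick $t_0$ large enough that $\dot{\mathscr{F}}(0)>2\nu$; in the borderline case $E(0)=0$ the same inequality with $\omega=0$ works under the hypothesis $\dot{\mathscr{F}}(0)>0$. An elementary Riccati-type ODE argument applied to $(\dot{\mathscr{F}}-2\nu)/\mathscr{F}$ then yields the lower bound displayed in the excerpt, proving exponential growth. I expect the main technical obstacle to be the Cauchy--Schwarz closure producing the master inequality with the correct sign: the auxiliary $\omega^2(t+t_0)$ term built into $\mathscr{F}$ is introduced precisely so that, when paired with the integrated-dissipation term $\int_0^t\!\!\int_\Omega\bigl(b|\nabla\Psi|^2+d|\Delta\Psi|^2\bigr)\,d\Omega\,ds$, the inequality matches exactly against $\mathscr{F}$ on the right. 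Once that pairing is checked, everything else is bookkeeping.
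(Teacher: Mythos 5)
Your proposal follows essentially the same route as the paper's own argument: the identical functional $\mathscr{F}$ built from the time-integrated heat equation with $\Psi=\alpha+\Phi$, the same second-derivative identity via \eqref{eq1} and \eqref{eq1.5}, the same Cauchy--Schwarz master inequality $\ddot{\mathscr{F}}\mathscr{F}-(\dot{\mathscr{F}}-\nu)^2\geq -2(\omega+E(0))\mathscr{F}$, logarithmic convexity with $\omega=0$ for uniqueness, and the choices $\omega=-E(0)$, $t_0$ large (so $\dot{\mathscr{F}}(0)>2\nu$) for the exponential lower bound. The argument is correct and matches the paper's proof in all essentials, including the borderline case $E(0)=0$, $\dot{\mathscr{F}}(0)>0$.
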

 
 	\section{Quasi-Static Case }
 	 We continue assuming that $c<0$, but we restrict our attention to the one-dimensional quasi-static case.
 	 Our system becomes
 	 \begin{align*}
 	 	cu_{xxxx}&=\eta \theta_{xx},\\
 	 	a\theta_t&=b\theta_{xx}-d\theta_ {xxxx}-\eta u_{xxt}.
 	 	 \end{align*}
 	 
 	We study this system in the interval $[0,1]$ with the boundary conditions
 	\begin{align*}
 	u(0,t)=u_{xx}(0,t)=u(1,t)=u_{xx}(1,t)=0,\\
 	\theta (0,t)=\theta_{xx}(0,t)=\theta (1,t)=\theta_{xx}(1,t)=0,\ \ t\geq 0.
 	\end{align*}
 	and the initial condition
 	$$\theta(x,0)=\theta^0(x).$$ 
 	We can integrate the first equation with respect to the spatial variable to obtain
 	\begin{align*}
 		c(u_{xxx}-u_{xxx}(0))=\eta (\theta _{x}-\theta_x(0)),
 	\end{align*}
 	and 
 	\begin{align*}
 		c(u_{xx}-xu_{xxx}(0))=\eta (\theta-x\theta_x(0)).
 	\end{align*}
 	At the point $x=1$, we have $u_{xx}(1)=\theta(1)=0.$ Therefore $cu_{xxx}(0)=\eta \theta_x(0)$ and we get 
 	$$cu_{xx}=\eta \theta.$$
 	After a time derivation we see
 	\begin{align*}
 		\eta u_{xxt}=\dfrac{\eta^2}{c} \theta_t.
 	\end{align*}
 	Going back to the second equation of this section, we find
 	\begin{align}
 		\left(a+\dfrac{\eta^2}{c}\right) \theta_t =b \theta_{xx}-d\theta _{xxxx}. \label{eqqq2}
 	\end{align}
 	In the case that $a>-\dfrac{\eta^2}{c}$ the study of equation \eqref{eqqq2} is well known. We can obtain the existence of an analytic semigroup that provides solutions and the exponential stability. At the same time we can guarantee that if $\theta_t\in L^2$ then $\theta_{xx}\in L^2$ for every $t>0$. Then, going back to the first equation of this section we can obtain $u\in H^2$. To provide an estimate for the behavior of $u(x,t)$ we see
 	\begin{align*}
 		-c\int_{0}^{1} \left| u_{xx}\right|^2 dx&= -\eta \int_{0}^{1} \theta u_{xx}\leq k\left( \int_{0}^{1}\left| \theta\right|^2 dx \right) ^{1/2}\left( \int_{0}^{1}\left| u_{xx}\right|^2 dx \right)^{1/2}. 
 	\end{align*}
 	Then we see
 	\begin{align*}
 		-c \int_{0}^{1}\left| u_{xx}\right|^2 \ dx \leq k^* \left( \int_{0}^{1}\left|\theta \right|^2 \ dx \right)  \leq k^* \left(\int_{0}^{1} \left| \theta^0\right| ^2 dx\right) \exp (-\omega t),
 	\end{align*}
 	which gives the exponential decay of $u$ in the $H^2$-norm.
 	\begin{theorem}
 	The problem determined by the one-dimensional quasi-static solutions in the case that $a>- \dfrac{\eta^2}{c}$ satisfies
 	\begin{enumerate}
 	\item There are solutions for every initial data in $L^2$, and these solutions are analytic with respect to time.
 	\item The solutions decay exponentially.
 	\end{enumerate}
 	\end{theorem}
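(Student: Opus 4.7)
The plan is to turn the derived scalar evolution equation
$$\tilde{a}\,\theta_t = b\,\theta_{xx} - d\,\theta_{xxxx}, \qquad \tilde{a} := a + \frac{\eta^2}{c} > 0,$$
into an abstract Cauchy problem on $L^2(0,1)$ and exploit the self-adjointness of the spatial operator. Concretely, I introduce
$$B\theta := \frac{1}{\tilde{a}}\bigl(d\,\theta_{xxxx} - b\,\theta_{xx}\bigr), \qquad D(B) = \bigl\{ \theta \in H^4(0,1) : \theta(0)=\theta(1)=\theta_{xx}(0)=\theta_{xx}(1)=0\bigr\},$$
so that \eqref{eqqq2} reads $\theta_t + B\theta = 0$. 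A double integration by parts (the hinged boundary conditions kill every boundary term) gives
$$\langle B\theta,\theta\rangle_{L^2} = \frac{1}{\tilde{a}}\int_0^1 \bigl(d|\theta_{xx}|^2 + b|\theta_x|^2\bigr)\,dx \geq 0,$$
so $B$ is symmetric and nonnegative. A standard Lax--Milgram argument on the bilinear form $\int (d\theta_{xx}\varphi_{xx} + b\theta_x\varphi_x)$ on $H^2\cap H^1_0$ shows that $B$ has compact resolvent, from which self-adjointness follows.

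Since $-B$ is a nonpositive self-adjoint operator with compact resolvent, it generates an analytic semigroup on $L^2(0,1)$, which settles assertion $(1)$ for $\theta$ for any initial datum $\theta^0\in L^2$. Furthermore, separation of variables yields the explicit eigenpairs $\varphi_n(x)=\sin(n\pi x)$ with eigenvalues
$$\mu_n = \frac{d\,n^4\pi^4 + b\,n^2\pi^2}{\tilde{a}} \geq \mu_1 > 0,$$
so $\|\theta(t)\|_{L^2} \leq e^{-\mu_1 t}\|\theta^0\|_{L^2}$, which is the exponential decay of $\theta$.

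For $u$, I use the reduced identity $c\,u_{xx}=\eta\,\theta$ with $u(0,t)=u(1,t)=0$. For each $t$, this is a Poisson problem whose right-hand side lies in $L^2$, so $u(\cdot,t)\in H^2\cap H^1_0$ and depends on $t$ with the same regularity as $\theta$; in particular $u$ inherits analyticity in time. The $H^2$-decay is then exactly the computation already laid out in the excerpt: testing with $u$, integrating by parts and applying Cauchy--Schwarz gives $|c|\,\|u_{xx}\|_{L^2}\leq |\eta|\,\|\theta\|_{L^2}$, whence
$$\|u_{xx}(\cdot,t)\|_{L^2}^2 \leq \frac{\eta^2}{c^2}\,\|\theta^0\|_{L^2}^2\,e^{-2\mu_1 t}.$$

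I do not expect a genuine obstacle: once the scalar reduction to \eqref{eqqq2} is in hand (already done in the excerpt), all the work is the textbook verification that a nonnegative self-adjoint operator generates an analytic, exponentially stable semigroup, plus the elementary elliptic recovery of $u$. The only place where one should be careful is checking that the chosen boundary conditions are exactly those needed for the integration by parts to eliminate boundary contributions and for $B$ to be self-adjoint; the simply-supported conditions $\theta=\theta_{xx}=0$ at both endpoints are tailored for this, so the verification is immediate.
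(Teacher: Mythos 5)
Your proposal is correct and takes essentially the same route as the paper: reduce to the scalar fourth-order parabolic equation \eqref{eqqq2}, get an analytic and exponentially stable semigroup for $\theta$, then recover $u$ from $c u_{xx}=\eta\theta$ and deduce the $H^2$-decay by the same elementary estimate. The only difference is that where the paper dismisses the semigroup step as ``well known,'' you supply the standard details (self-adjointness of the operator via Lax--Milgram and the explicit eigenbasis $\sin(n\pi x)$), which as a bonus gives the concrete decay rate $\mu_1=\left(d\pi^4+b\pi^2\right)/\left(a+\eta^2/c\right)$.
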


\section{Conclusion}
In this article, we have analyzed the system of equations that governs the thermoelastic deformation of a plate, where heat conduction is modeled by the Green-Naghdi Type I theory with higher-order spatial derivatives. Assuming standard conditions on the constitutive coefficients, we have established the following qualitative properties:

\begin{enumerate}
    \item Existence of a semigroup that defines the solutions in an appropriate Hilbert space. Uniqueness of solutions is also satisfied.
    \item Exponential stability of the solutions.
    \item Non-differentiability of the semigroup, implying non-analyticity.
    \item Impossibility of localizing the solutions.
\end{enumerate}
Later, we consider the case when the elastic parameter is negative and we show:
\begin{enumerate}
    \item Uniqueness and instability of solutions.
    \item For the one-dimensional quasi-static problem, existence and exponential stability of solutions, provided specific parameter conditions are satisfied.
\end{enumerate}

It is instructive to compare these results with those for the classical case, where higher-order derivatives in heat conduction are absent. Properties 1, 2, and 4 align closely with the classical setting; however, property 3 marks a significant difference. Notably, the introduction of stronger dissipation unexpectedly leads to a loss of solution regularity, a phenomenon that merits further attention. Properties 5 and 6 are novel, even in the absence of higher-order derivatives.			
			
\section*{Acknowledgments} 

Jaime Muñoz Rivera was supported by CNPq project 307947/2022-0  and Fondecyt project 1230914.

\section*{Conflict of interest}	
This work does not have any conflicts of interest.


\begin{thebibliography}{99}
			
			
			
%
%
%
%
%
%
%
%
%
%
%
%
%
%
%
%
%
%
%
%
%
%
%
%
%
%
%
%
%
%
%
%
%
%
%
%
%
%
%
%
%
%
%
	\bibitem{Ames}
{\sc Ames, K. A., Straughan, B.}, {\em Non-Standard and Improperly Posed Problems}, Academic Press, San Diego, 1997.

 

	\bibitem{M2}
{\sc Fernández Sare H.D., Quintanilla, R.},
{\em  Moore Gibson Thompson thermoelastic plates: comparisons}, J. Evol. Equ. 23, \textbf{70}, (2023). https://doi.org/10.1007/s00028-023-00921.  
\bibitem{flavin}
{\sc \textcolor{red}{Flavin J. and Rionero S.}},  Qualitative Estimates for Partial Differential Equations: An Introduction, (1996), 10.1201/9781003069010. 

	\bibitem{M2.1}
{\sc \textcolor{red}{Forest S. and Amestoy M.}}, {\em Hypertemperature in thermoelastic solids}, Comptes Rendus Mecanique - C R MEC. (2008).  \textbf{336}. 347-353. 10.1016/j.crme.2008.01.007. 
	\bibitem{[5]} 	{\sc Green  A.E. and  Naghdi P.M.,} {\em A re-examination of the basic postulates of themomechanics,} Proc. Royal Society London A, \textbf{432 }(1991), 171–194.

\bibitem{[6]} {\sc  Green A.E. and  Naghdi P.M.},{\em On undamped heat waves in an elastic solid.}, J. Thermal
Stresses, \textbf{15} (1992), 253–264.

	\bibitem{[7]} {\sc  Green A.E., Naghdi  P.M.}; {\em Thermoelasticity without energy dissipation}, J. Elasticity, \textbf{31} (1993), 189–208.
	
	

\bibitem{D1}
{\sc  Iesan D. and  Quintanilla R.}, {\em “A second gradient theory of thermoelasticity”}, J. Elast., vol. 154, no. \textbf{5}, pp. 629–643, 2023. DOI: 10.1007/s10659-023-10020-1. 

\bibitem{D2}

{\sc  Iesan D., Magaña  A. and  Quintanilla R.}, {\em “A second gradient theory of thermoviscoelasticity”}, J. Therm. Stress, vol. \textbf{47}, no. 9, pp. 1145–1158, 2024. DOI: 10.1080/01495739.2024.2365265. 

	\bibitem{D3}
{\sc  Iesan D.}, {\em "Second gradient theory of thermopiezoelectricity"}, Acta Mech, vol. 235, no. \textbf{8}, pp. 5379–5391, 2024. DOI: 10.1007/s00707-024-03999-8. 


	\bibitem{D4}
{\sc Iesan  D.}, {\em "Thermal stresses that depend on the temperature gradients"}, Z. Angew. Math. Phys, vol. \textbf{74}, no. 4, pp. 138, 2023. DOI: 10.1007/s00033-023-02034-5. 


	\bibitem{D5}
{\sc \textcolor{red}{Iesan D. and Quintanilla R.}},  {\em  Second gradient thermoelasticity with microtemperatures}, Electronic Research Archive, (2025), \textbf{33}, 537-555, 10.3934/era.2025025. 

 	\bibitem{D6}
 {\sc \textcolor{red}{ Iesan D., Scalia A.}},  {\em  Thermoelastic Deformations}, Solid Mechanics and Its Applications,  Kluwer Acad. Publishers, Dordrecht, Boston, London (1996), \textbf{48} . 
 
 

\bibitem{EngelNagel}
{\sc K. Engel and R. Nagel,}
\emph{One Parameter semigroups for linear evolution equations},
Springer-Verlag, New York, 2000.  

\bibitem{Z1}	 
{\sc Liu 	Z., Renardy M.}, {\em A note on the equations of a thermoelastic plate}, Applied Mathematics Letters, Volume \textbf{8},  1995, Pages 1-6.
\bibitem{Z2}
{\sc 
	Liu  Z., Quintanilla R.}, {\em Analyticity of solutions in type III thermoelastic plates}, IMA Journal of Applied Mathematics, Volume \textbf{75}, Issue 3, June 2010, Pages 356–365.

\bibitem{Z3}
{\sc Liu Z., Quintanilla R., Wang  Y.}, 	{\em On the regularity and stability of three-phase-lag thermoelastic plate}, Applicable Analysis 
An International Journal Volume \textbf{101}, 2022 - Issue 15: 5376-5385.



	\bibitem{M1}
{\sc	  Magaña A.,  Quintanilla R.},
{\em Decay of solutions for second gradient viscoelasticity with type II heat conduction}, Evolution Equations and Control theory, 2024, Volume \textbf{13}, Issue 3: 787-801.



	\bibitem{M3}
	{\sc\textcolor{red}{ Muñoz Rivera J.,  Ochoa Ochoa E. and  Quintanilla R.}}, {\em  Time decay of viscoelastic plates with type II heat conduction}, Journal of Mathematical Analysis and Applications (2023) 528:2, pages 127592.






\bibitem{Pazy}
	{\sc   Pazy A.,}
 \emph{Semigroup of linear operators and applications to partial diferential equations},
 Springer-Verlag, New York, 1983.




\bibitem{Pr84}
\newblock {\sc  Pr\"uss J.}
\newblock On the spectrum of $C_{0}$-semigroups, {\em Trans. Amer. Math. Soc.}, \textbf{284} (1984), 847--857.




	\bibitem{R1}
		{\sc Quintanilla  R., Racke R.}, {\em Addendum to: Qualitative aspects of solutions in resonators}, Arch. Mech. \textbf{63} (4), 429-435, 2011.
		 
		 	\bibitem{R2}
		 {\sc Quintanilla R.,  Racke R., Ueda Y.},
		 {\em Decay for thermoelastic Green-Lindsay plates in bounded and unbounded domains}, Communications Pure Applied Analysis 2023,Volume \textbf{22}, Issue 1: 167-191. Doi: 10.3934/cpaa.2022149. 
		 

		 
	
		
		
		\end{thebibliography}
	\end{document}